\newtheorem{theorem}{Theorem}[section] 
\newtheorem{proposition}[theorem]{Proposition} 
\newtheorem{conjecture}[theorem]{Conjecture} 
\newtheorem{lemma}[theorem]{Lemma} 
\newtheorem{remark}[theorem]{Remark}
\newenvironment{proof}{\begin{trivlist}\item{\bf{Proof.}}}
  {\hfill\rule{2mm}{2mm}\end{trivlist}}
\newcommand{\ZZ}{\mathbb{Z}}
\newcommand{\NN}{\mathbb{N}}
\newcommand{\CC}{\mathbb{C}}
\newcommand{\QQ}{\mathbb{Q}}
\newcommand{\RR}{\mathbb{R}}
\newcommand{\qbinom}[2]{\genfrac{[}{]}{0mm}{1}{#1}{#2}}
\newcommand{\qbase}[3]{\genfrac{[}{]}{0mm}{1}{#1,\,#2}{#3}}
\newcommand{\ano}{\mathsf{A}}
\newcommand{\intr}{\operatorname{Int}}
\newcommand{\sh}{\mathsf{s}}  
\def\x{{\bf x}}
\def\a{{\bf a}}
\def\s{{\bf s}}
\newcommand{\alphab}{\boldsymbol{\alpha}}
\title{$q$-Ehrhart polynomials of Gorenstein polytopes,\\Bernoulli umbra and related Dirichlet series}
\author{F. Chapoton\footnote{Cet auteur a bénéficié d'une aide de
    l'Agence Nationale de la Recherche (projet Carma, référence
    ANR-12-BS01-0017).}~ and D. Essouabri}
\date{\today}
\begin{document}

\maketitle

\begin{abstract}
  This article considers some $q$-analogues of classical results
  concerning the Ehrhart polynomials of Gorenstein polytopes, namely
  properties of their $q$-Ehrhart polynomial with respect to a good
  linear form. Another theme is a specific linear form $\Psi$
  (involving Carlitz' $q$-analogues of Bernoulli numbers) on the space
  of polynomials, for which one shows interesting behaviour on these
  $q$-Ehrhart polynomials. A third point is devoted to some related
  zeta-like functions associated with polynomials.
\end{abstract}

\section*{Introduction}

This article deals with properties of the $q$-Ehrhart polynomial of
Gorenstein lattice polytopes, with a specific linear form on
polynomials and with its relationship to some Dirichlet series. But
the initial motivation was something rather different. Let us start
with a short account of this story.

It has proved useful and interesting to consider, as a kind of
non-associative replacement of usual formal power series in one
variable, some infinite formal sums of rooted trees with coefficients
in a base ring, where rooted trees plays the role of the monomials
$x^n$. These objects can then be multiplied and composed, making a
setting very similar to the classical one. One could call them
\textit{tree-indexed series}.

Among all the tree-indexed series, there are two specific ones,
playing a role similar to the usual exponential and logarithm power
series. Let us call them $A$ and $\Omega$
\cite{chapoton_om_qx,chapoton_om}. Then $A$ is an analog of the
exponential and there is a very simple and nice formula for its
coefficients which are all positive rationals. On the contrary,
$\Omega$ is an analog of the logarithm and has very complicated
rational coefficients with signs, some of them vanishing.

One intriguing problem is to understand for which trees does the
coefficient in $\Omega$ vanish. Here the Ehrhart polynomials enters
the game, as it is known that the coefficient $\Omega_T$ of a tree $T$
in $\Omega$ can be expressed using the Ehrhart polynomial of a
polytope attached to $T$.

After looking closely at the trees with vanishing coefficients, one
observed that most of them (but not all) have a very special shape,
namely all their leaves have the same height. It turns out that this
shape implies that the associated polytope is Gorenstein. This was the
starting point of the present article.

Once generalized as much as possible, the problem was then to prove
that a specific linear form $\Psi$ vanishes on the product of Ehrhart
polynomials of two $r$-Gorenstein polytopes of odd total dimension.

It turns out that this vanishing property is best seen and explained
when the classical notion of Ehrhart polynomial is replaced by the
$q$-Ehrhart polynomial introduced in \cite{chapoton_qehrhart}. Instead
of a vanishing property, one has to prove that a specific linear form
takes values that are (up to sign) self-reciprocal elements of
$\QQ(q)$.

The vanishing properties that are obtained as a corollary when setting
$q=1$ are similar to the vanishing of the Bernoulli numbers of odd
indices, which can be seen as a consequence of the functional equation
of the Riemann $\zeta$ function. This inspired by analogy the study of
a Dirichlet series attached to a polynomial, for which one describes
the analytic continuation and obtains a simple expression of values at
negative integers in term of the linear form $\Psi$.

\medskip

Let us now describe the contents of this article.

In section \ref{step1}, one states a simple symmetry property of the
$q$-Ehrhart polynomials of Gorenstein polytopes.

Sections \ref{step2} and \ref{step3} are about the general setting,
sketching the landscape and gathering tools. Section \ref{step2}
introduces a $q$-analog of the space of integer-valued polynomials and
several subspaces and bases. In section \ref{step3}, one introduces a
linear operator $\Sigma$ and two linear forms $V$ and $\Psi$, compute
some of their values and explain the relations of $\Psi$ with some
$q$-analogues of Bernoulli numbers due to Carlitz.

Section \ref{main} contains the main results about $q$-Ehrhart
polynomials. One first obtains a symmetry property for the
coefficients of the $q$-Ehrhart polynomials of Gorenstein polytopes,
when expressed in a particular basis. This statement is a $q$-analog
of the well-known symmetry of the coefficients of the numerator of the
Ehrhart series of Gorenstein polytopes. Using this symmetry, one then
proves that the image by $\Psi$ of the product of two Ehrhart
polynomials of $r$-Gorenstein polytopes is (up to sign)
self-reciprocal. A similar result is obtained in the special case of
$1$-Gorenstein (a.k.a. reflexive) polytopes.

In section \ref{classic}, one lets $q=1$ and goes back to the classical
setting of Ehrhart polynomials. One first states the desired vanishing
results, as easy consequences of the main results. This is illustrated
by several examples. One also proposes a conjecture about what happens
when one considers the powers of one fixed Gorenstein polytope. The
sequence of rational numbers thus obtained seems to share properties
with the Bernoulli numbers. This suggest to see them as the values at
negative integers of a zeta-like function, by analogy with the
classical relation between Bernoulli numbers and Riemann zeta
function. This leads to the definition of a Dirichlet
series attached to the Ehrhart polynomial.

In the last section \ref{meromorph}, one proves that these Dirichlet
series have a meromorphic continuation to $\CC$ with just a simple
pole at $1$, and that their values at negative integers are given by
the expected formula.

\medskip

Many thanks to Éric Delaygue, Frédéric Jouhet, Philippe Nadeau and
Tanguy Rivoal for useful discussions and their help to navigate in the
hypergeometric ocean.

\subsection{Notations}

Let us introduce some basic notations. The letter $q$ will always
stand for a formal parameter. It will either be considered as an
element of $\QQ(q)$ or as an element of $\ZZ[q,1/q]$.

For $n\in\NN$, we will denote by $[n]_q$ the $q$-integer $1+q+\dots +
q^{n-1}$. Using instead the formula $\frac{q^n-1}{q-1}$, this can be
extended to $n \in \ZZ$. One then has the obvious relations $[n]_{1/q}
= q^{-n+1} [n]_q$ and $[-n]_q = - q^{-n} [n]_q$.

For $n \in \NN$, we will denote by $[n]!_q$ the $q$-factorial of $n$,
namely the product $[1]_q[2]_q \dots [n]_q$.

For $m,n \in \NN$, we will denote by $\qbinom{m}{n}_q$ the
$q$-binomial $\frac{[m]!_q}{[n]!_q[m-n]!_q}$.

For fixed $n\in \NN$, this can be written as $([m]_q [m-1]_q \dots
[m-n+1]_q)/[n]!_q$, which makes sense for every $m \in \ZZ$. One then
has the useful formulas $\qbinom{-m}{n}_q = (-1)^n q^{-n m
  -\binom{n}{2}}\qbinom{m+n-1}{n}_q$ and $\qbinom{m}{n}_{1/q}=
q^{-m(n-m)} \qbinom{-m}{n}_q$.

\medskip

All the previous notations are rather standard for these
$q$-analogues. One will need also some other notations, less classical.

For $n \in \NN$, let $[n,x]_q$ be the polynomial $[n]_q + q^{n}x$.
For $m,n \in \NN$, let also define the polynomial
\begin{equation*}
\qbase{m}{x}{n}_q = \frac{[m-n+1,x]_q[m-n+2,x]_q\dots [m,x]_q}{[n]_q!}.
\end{equation*}
When $q$ is replaced by $1$, they become $n+x$ and $\binom{m+x}{n}$.

These polynomials are defined in this way so that they have nice
evaluations when $x$ is replaced by a $q$-integer $[k]_q$. Indeed
$[n,[k_q]]_q$ is just $[n+k]_q$ and therefore
$\qbase{m}{[k]_q}{n}_q=\qbinom{m+k}{n}_q$.

They also satisfy the following translation properties:
\begin{equation}
[n,[k,x]_q]_q = [n+k, x]_q\quad\text{and}\quad  \qbase{m}{[k,x]_q}{n}_q = \qbase{m+k}{x}{n}_q.
\end{equation}

\section{On $q$-Ehrhart polynomials of Gorenstein polytopes}

\label{step1}

One will use here some results of the article \cite{chapoton_qehrhart},
where a $q$-analogue of the classical theory of Ehrhart polynomial has
been introduced.

Recall that a lattice polytope $P$ is called \textit{reflexive} if it
contains the lattice origin $0$ and the dual polytope $P^*$ is also a
lattice polytope. These polytopes are used in the study of mirror
symmetry in the setting of toric geometry. There is another closely
related notion. A lattice polytope $P$ is called
$r$-\textit{Gorenstein} (for some integer $r \geq 1$) if the dilated
polytope $r P$ is (up to lattice translation) reflexive.

For more on reflexive and Gorenstein polytopes, the reader can consult 
for example \cite{batyrev_jag,batyrev_nill,nill,nill_gor,nill_schepers}.

Let us now assume that $P$ is an $r$-Gorenstein lattice polytope of
dimension $D$. Let $z_0$ be the unique interior lattice point of $r
P$.

Let $\lambda$ be a linear form on the lattice, such that $\lambda$ is
positive on $P$ and $\lambda$ is not constant on any edge of
$P$. These conditions are required for the definition of the
$q$-Ehrhart polynomial (see \cite{chapoton_qehrhart} for details).

Then one can consider the $q$-Ehrhart polynomial $E_{P,\lambda}(x,q)$, defined by
\begin{equation}
  E_{P,\lambda}([n]_q, q) = \sum_{s \in n P} q^{\lambda(s)}.
\end{equation}
For short, it will be denoted by $E$ when no ambiguity is
possible. This is a polynomial in $\QQ(q)[x]$.

Our first result is the following simple symmetry property.

\begin{proposition}
  \label{symmetry_of_qehr}
  The $q$-Ehrhart polynomial $E$ satisfies
  \begin{equation}
    E(x, q) = (-1)^D E(-q [r,x]_q, 1/q) q^{-\lambda(z_0)}.
  \end{equation}
\end{proposition}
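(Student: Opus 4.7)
The plan is to check the claimed equality of polynomials in $x$ by evaluating both sides at the infinitely many points $x = [n]_q$ for $n \in \NN$. This is enough because both sides lie in $\QQ(q)[x]$ (the right-hand side because $-q[r,x]_q$ is affine in $x$, hence its composition with the polynomial $E(\cdot,1/q)$ is again a polynomial in $x$), and the $q$-integers $[n]_q$ are pairwise distinct.

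First I would rewrite the inner substitution in a recognizable form. The translation identity $[r,[n]_q]_q = [r+n]_q$ recalled in the Notations subsection, together with the elementary fact $-q[m]_q = [-m]_{1/q}$ (immediate from $[m]_q = (q^m-1)/(q-1)$), gives
\[
-q\,[r,[n]_q]_q \;=\; -q\,[r+n]_q \;=\; [-(r+n)]_{1/q}.
\]
So the right-hand side of the proposition evaluated at $x = [n]_q$ equals $(-1)^D\, E([-(r+n)]_{1/q},\,1/q)\,q^{-\lambda(z_0)}$.

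Next I would invoke two ingredients. The first is the $q$-analogue of Ehrhart reciprocity from \cite{chapoton_qehrhart}, namely $E([-m]_q, q) = (-1)^D \sum_{s \in \intr(mP)\cap\ZZ^D} q^{-\lambda(s)}$ for $m \in \NN$. Substituting $q \mapsto 1/q$ and $m = r+n$ turns $E([-(r+n)]_{1/q},1/q)$ into $(-1)^D \sum_{s \in \intr((r+n)P)\cap\ZZ^D} q^{\lambda(s)}$. The second is the defining property of an $r$-Gorenstein polytope: translation by $z_0$ is a bijection from $nP \cap \ZZ^D$ onto $\intr((r+n)P) \cap \ZZ^D$. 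Linearity of $\lambda$ then factorizes the interior sum as $q^{\lambda(z_0)} \sum_{t \in nP \cap \ZZ^D} q^{\lambda(t)} = q^{\lambda(z_0)} E([n]_q, q)$.

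Collecting the signs and powers of $q$, the right-hand side at $x=[n]_q$ equals $(-1)^D(-1)^D\,q^{\lambda(z_0)} E([n]_q,q)\,q^{-\lambda(z_0)} = E([n]_q, q)$, which is the left-hand side. The main obstacle is only to locate and quote the exact form of the $q$-Ehrhart reciprocity in \cite{chapoton_qehrhart}; once that and the Gorenstein translation bijection are in hand, the two structural inputs fit together with essentially no residual computation.
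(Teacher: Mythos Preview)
Your proof is correct and follows essentially the same route as the paper's own argument: both rely on $q$-Ehrhart reciprocity together with the Gorenstein translation bijection $nP\cap\ZZ^D \to \intr((n+r)P)\cap\ZZ^D$ by $z_0$, and then conclude by evaluating at infinitely many $q$-integers. The only cosmetic difference is the choice of test points---the paper plugs in $x=[-n-r]_q$ and then identifies the change of variables $X=-q[r,x]_q$, whereas you plug in $x=[n]_q$ directly and simplify the inner argument via $-q[r+n]_q=[-(r+n)]_{1/q}$; the content is the same.
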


This is a $q$-deformation of the classical relation
\begin{equation}
  E(x) = (-1)^D E(-r-x)
\end{equation}
for the Ehrhart polynomial of $r$-Gorenstein lattice polytopes.

\begin{proof}
  Let $n\geq 1$ be an integer.  By $q$-Ehrhart reciprocity
  \cite[Th. 2.5]{chapoton_qehrhart}, one knows that
  \begin{equation*}
    E([-n]_q,q) = (-1)^D \sum_{s \in \intr (n P)} q^{-\lambda(s)}.
  \end{equation*}
  By the Gorenstein property, the translation by the vector $z_0$
  gives an isomorphism of lattice polytopes from $n P$ to $\intr ((n+r)
  P)$. It follows that
  \begin{equation*}
    E([-n-r]_q, q) = (-1)^D \sum_{s \in n P} q^{-\lambda(s+z_0)},
  \end{equation*}
  whose right hand side can be written as
  \begin{equation*}
    (-1)^D E([n]_q, q) \bigg\vert_{q=1/q} q^{-\lambda(z_0)}=(-1)^D E([n]_{1/q}, 1/q) q^{-\lambda(z_0)} .
  \end{equation*}
  Then consider the variables 
  \begin{equation*}
    x = [-n-r]_{q}, \quad
    X = [n]_{1/q}.
  \end{equation*}
  One can check that they are related by $X = -q [r,x]_q$. The
  statement follows.
\end{proof}

Note that the product $P \times Q$ of two $r$-Gorenstein polytopes is
still an $r$-Gorenstein polytope. Moreover, the $q$-Ehrhart polynomial
of a product $P \times Q$ of polytopes, with respect to the linear
form $\lambda \oplus \mu$, is the product $E_{P,\lambda}
E_{Q,\mu}$. This is obviously compatible with the proposition.

\medskip

If $P$ is a polytope, let us call the \textbf{pyramid} over $P$ the
convex hull of $(0,0)$ and $1 \times P$ in a lattice of one more
dimension. The pyramid over an $r$-Gorenstein polytope is an
$r+1$-Gorenstein polytope.

\section{Binomial bases for polynomials in $x$}

\label{step2}

Let us now consider the polynomial ring $\QQ(q)[x]$ and some of its
elements.

This ring has a basis over $\QQ(q)$ given by the polynomials
$\qbase{n}{x}{n}_q$ for $n \geq 0$, which will be called the
$B$-basis.

Let us now define $\ano_q$, as the subspace of $\QQ(q)[x]$ generated
over $\ZZ[q,1/q]$ by the polynomials $\qbase{n}{x}{n}_q$.

For an integer $d \in \NN$, let us denote by $\ano^{(d)}_q$ the
subspace of $\ano_q$ of polynomials of degree at most $d$.

\begin{proposition}
  \label{h_base}
  The polynomials $\qbase{k}{x}{d}_q$ for $k=0,\dots,d$ form a basis
  of $\ano^{(d)}_q$ over $\ZZ[q,1/q]$.
\end{proposition}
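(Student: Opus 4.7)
The plan is to split the proof into two parts: first check that each $\qbase{k}{x}{d}_q$ lies in $\ano^{(d)}_q$, then verify that the family is a basis by means of the evaluation map at $q$-integers.

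For the containment, the main tool is a Pascal-type recursion
\begin{equation*}
\qbase{m}{x}{n}_q = \qbase{m-1}{x}{n-1}_q + q^n \qbase{m-1}{x}{n}_q,
\end{equation*}
which one checks directly from the product formula defining $\qbase{\cdot}{\cdot}{\cdot}_q$, using the identity $[m]_q + q^m x = [n]_q + q^n\bigl([m-n]_q + q^{m-n}x\bigr)$. Rearranged as $\qbase{k-1}{x}{d}_q = q^{-d}\bigl(\qbase{k}{x}{d}_q - \qbase{k-1}{x}{d-1}_q\bigr)$, this recursion allows a descending induction on $k$, starting from the trivial case $\qbase{d}{x}{d}_q \in \ano^{(d)}_q$, to reduce the statement for degree $d$ to the statement for degree $d-1$. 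An outer induction on $d$ with base $d = 0$ then yields $\qbase{k}{x}{d}_q \in \ano^{(d)}_q$ for all $0 \le k \le d$.

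For the basis property, I would use the evaluation map $\phi_d : P(x) \mapsto \bigl(P([j]_q)\bigr)_{j=0}^d$ from $\QQ(q)[x]_{\le d}$ into $\QQ(q)^{d+1}$. Since the $d+1$ points $[j]_q$ are distinct, $\phi_d$ is injective. Using $\qbase{m}{[j]_q}{n}_q = \qbinom{m+j}{n}_q \in \ZZ[q]$, it sends $\ano^{(d)}_q$ into $\ZZ[q,1/q]^{d+1}$. The image of $\qbase{k}{x}{d}_q$ is the vector $\bigl(\qbinom{k+j}{d}_q\bigr)_{j=0}^d$, and the corresponding $(d+1)\times(d+1)$ matrix is anti-triangular with $1$'s on the anti-diagonal, because $\qbinom{m}{d}_q = 0$ for $0 \le m < d$ and $\qbinom{d}{d}_q = 1$. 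Its determinant is therefore $\pm 1$, so these vectors form a $\ZZ[q,1/q]$-basis of $\ZZ[q,1/q]^{d+1}$. Combined with the containment from the first step, this forces $\phi_d$ to restrict to a $\ZZ[q,1/q]$-isomorphism $\ano^{(d)}_q \cong \ZZ[q,1/q]^{d+1}$, whence pulling back the basis gives the claim.

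The main obstacle is the containment step: the defining formula for $\qbase{k}{x}{d}_q$ involves division by $[d]_q!$ and, when $k < d$, factors $[k-d+i,x]_q$ with negative subscript, so it is not obvious a priori that the $B$-basis expansion has $\ZZ[q,1/q]$-coefficients. The Pascal recursion is precisely what trades these denominators for the harmless factor $q^{-d}$ and makes the inductive argument go through.
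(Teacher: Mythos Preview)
Your proof is correct, but it follows a genuinely different route from the paper's. The paper deduces the proposition from a single explicit formula (Lemma~\ref{square_matrix}, an instance of $q$-Chu--Vandermonde) expressing each $\qbase{i}{x}{d}_q$ in the $B$-basis; the resulting change-of-basis matrix is triangular with entries $(-1)^{d-i}q^{-d(d-i)+\binom{d-i}{2}}$ on the diagonal, which are units in $\ZZ[q,1/q]$, so containment and invertibility come in one stroke. You instead avoid any hypergeometric identity: containment is obtained from the Pascal-type recursion $\qbase{m}{x}{n}_q = \qbase{m-1}{x}{n-1}_q + q^n \qbase{m-1}{x}{n}_q$ via a nested induction, and the basis property from the evaluation map at $[0]_q,\dots,[d]_q$, whose matrix $\bigl(\qbinom{k+j}{d}_q\bigr)_{k,j}$ is anti-triangular with $1$'s on the anti-diagonal. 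The paper's approach is shorter and yields an explicit expansion that is reused later, while your argument is more self-contained and elementary, needing only the one-line recursion rather than an appeal to $q$-Chu--Vandermonde.
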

\begin{proof}
  This follows from lemma \ref{square_matrix}, which proves that the
  matrix of coefficients of these polynomials in the $B$-basis is
  triangular with powers of $q$ on the diagonal.
\end{proof}

\begin{lemma}
  \label{square_matrix}
  For integers $0 \leq i \leq d$, there holds
  \begin{equation}
    \qbase{i}{x}{d}_q = \sum_{j=0}^{d} (-1)^{d-j} q^{-d(d-i)+\binom{d-j}{2}} \qbinom{d-i}{d-j}_q \qbase{j}{x}{j}_q.
  \end{equation}
\end{lemma}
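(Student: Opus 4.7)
The plan is to reduce this polynomial identity in $x$ to a $q$-binomial coefficient identity by interpolation, and then to prove the latter by a generating-function argument.

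First I observe that both sides are polynomials in $x$ over $\QQ(q)$ of degree at most $d$: the left-hand side has degree $d$, and each $\qbase{j}{x}{j}_q$ on the right has degree $j \leq d$. Hence it suffices to verify the identity at infinitely many values of $x$, and the natural choice is $x = [k]_q$ for $k \in \NN$, which are pairwise distinct in $\QQ(q)$. Using the evaluation $\qbase{m}{[k]_q}{n}_q = \qbinom{m+k}{n}_q$ noted in the introduction, the claim reduces to the $q$-binomial identity
\[
\qbinom{i+k}{d}_q = \sum_{j=0}^{d} (-1)^{d-j}\, q^{-d(d-i)+\binom{d-j}{2}}\, \qbinom{d-i}{d-j}_q\, \qbinom{j+k}{j}_q,
\]
to be established for all $k \geq 0$.

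To prove this, I would multiply both sides by $z^k$ and sum over $k \geq 0$. The standard $q$-geometric series
\[
\sum_{k\geq 0}\qbinom{j+k}{j}_q z^k = \prod_{r=0}^{j}\frac{1}{1-q^r z}, \qquad \sum_{k\geq 0}\qbinom{i+k}{d}_q z^k = \frac{z^{d-i}}{\prod_{r=0}^{d}(1-q^r z)}
\]
(the second obtained from the first by shifting $k$ by $d-i$) turn the identity into an equality of rational functions in $z$. Clearing the common denominator $\prod_{r=0}^{d}(1-q^r z)^{-1}$ and substituting $w = q^d z$, $m = d-i$, the claim is equivalent to
\[
w^m = \sum_{u=0}^{m}(-1)^u q^{\binom{u}{2}}\qbinom{m}{u}_q \prod_{t=0}^{u-1}(1-q^{-t}w),
\]
which can be verified by two applications of the $q$-binomial theorem: first to expand each product $\prod_{t=0}^{u-1}(1-q^{-t}w)$ as a polynomial in $w$, then to recognize the resulting inner alternating sum as $\delta$-supported via the vanishing $\sum_v(-1)^v q^{\binom{v}{2}}\qbinom{n}{v}_q = \delta_{n,0}$ (again from the $q$-binomial theorem evaluated at $w=1$).

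The main obstacle will be the sign-and-exponent bookkeeping in this last step. A more pedestrian alternative, avoiding generating functions altogether, is induction on $m = d-i$ using the $q$-Pascal rule $\qbinom{M+1}{N}_q = \qbinom{M}{N-1}_q + q^N\qbinom{M}{N}_q$: this turns the claim into a recursive identity between $q$-binomial coefficients that can be verified by a direct manipulation, albeit at the cost of less conceptual transparency.
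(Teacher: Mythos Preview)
Your reduction to the $q$-binomial identity at $x=[k]_q$ is exactly the paper's first step. Where you diverge is in the verification of that identity: the paper simply recognises it as an instance of the $q$-Chu--Vandermonde formula and cites \cite[(II.7)]{gasper}, whereas you give a self-contained derivation via the generating functions $\sum_{k}\qbinom{j+k}{j}_q z^k$ and two applications of the $q$-binomial theorem. Your argument is correct (I checked the bookkeeping: after the substitution $w=q^d z$, $m=d-i$, expanding $\prod_{t=0}^{u-1}(1-q^{-t}w)$ and using $\qbinom{m}{u}_q\qbinom{u}{s}_q=\qbinom{m}{s}_q\qbinom{m-s}{u-s}_q$ collapses the inner sum via $\sum_v(-1)^v q^{\binom{v}{2}}\qbinom{n}{v}_q=\delta_{n,0}$ exactly as you say). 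The trade-off is clear: the paper's proof is a one-line citation but presupposes familiarity with basic hypergeometric identities, while yours is longer and requires careful exponent tracking but depends only on the $q$-binomial theorem, making it more elementary and self-contained. The inductive alternative you mention at the end would also work and is essentially a direct proof of $q$-Chu--Vandermonde in this special case.
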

\begin{proof}
  It is enough to check this for all $q$-integers $[k]_q$. This becomes
  \begin{equation*}
    \qbinom{i+k}{d}_q = \sum_{j=0}^{d} (-1)^{d-j} q^{-d(d-i)+\binom{d-j}{2}}\qbinom{d-i}{d-j}_q \qbinom{j+k}{j}_q.
  \end{equation*}
  This is an instance of the $q$-Chu-Vandermonde formula for the
  ${}_2\phi_{1}$ basic hypergeometric function, see for example
  \cite[Appendix II, formula (II.7)]{gasper}.
\end{proof}

Let us now describe the product in this basis.

\begin{proposition}
  \label{gamma}
  For all integers $0 \leq i \leq d$ and $0 \leq j \leq e$, there holds
  \begin{equation}
    \qbase{i}{x}{d}_q \qbase{j}{x}{e}_q = \sum_{0 \leq \ell \leq d+e} q^{(\ell-e-i)(\ell-d-j)} \qbinom{d+j-i}{\ell-i}_q\qbinom{e+i-j}{\ell-j}_q\qbase{\ell}{x}{d+e}_q.
  \end{equation}
\end{proposition}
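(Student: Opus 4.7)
The plan is to reduce the polynomial identity to a $q$-binomial identity by specialization, and then to match it to a known basic hypergeometric summation, mirroring the strategy used in the proof of Lemma~\ref{square_matrix}.

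First I would observe that both sides of the claimed identity are polynomials in $x$ over $\QQ(q)$ of degree exactly $d+e$ (indeed each $\qbase{a}{x}{b}_q$ has degree $b$ in $x$). Hence equality needs only to be checked at infinitely many values of $x$, and a natural choice is $x=[k]_q$ for $k\in\NN$, which take pairwise distinct values in $\QQ(q)$. Using the evaluation $\qbase{m}{[k]_q}{n}_q=\qbinom{m+k}{n}_q$ recalled in the notations, the identity to prove reduces to the purely $q$-binomial statement
\begin{equation*}
\qbinom{i+k}{d}_q \qbinom{j+k}{e}_q = \sum_{\ell=0}^{d+e} q^{(\ell-e-i)(\ell-d-j)} \qbinom{d+j-i}{\ell-i}_q \qbinom{e+i-j}{\ell-j}_q \qbinom{\ell+k}{d+e}_q
\end{equation*}
for all $k\in\NN$.

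My next step would be to recognize this as a terminating basic hypergeometric identity of $q$-Chu-Vandermonde (or $q$-Pfaff-Saalschütz) type. After shifting the summation index by $m=\ell-\min(i,j)$ and pulling out the $k$-independent prefactors, the right-hand side should organize itself into a ${}_2\phi_1$ or ${}_3\phi_2$ evaluable in closed form by a standard entry of the appendix of \cite{gasper}, in exactly the same spirit in which Lemma~\ref{square_matrix} was obtained from formula (II.7) there. Alternatively, one could expand both sides in the $B$-basis via Lemma~\ref{square_matrix} and match coefficients; this produces a symmetric double sum whose evaluation again boils down to $q$-Chu-Vandermonde.

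The main obstacle is the quadratic power of $q$, namely $q^{(\ell-e-i)(\ell-d-j)}$, which does not immediately fit the standard shape of a ${}_2\phi_1$ summand. I would handle this by expanding the exponent as $(\ell-e-i)(\ell-d-j)=\ell^2-(d+e+i+j)\ell+(e+i)(d+j)$ and redistributing the $\binom{\cdot}{2}$-type and linear-in-$\ell$ contributions into the surrounding $q$-binomials via the involutions $\qbinom{m}{n}_{1/q}=q^{-n(m-n)}\qbinom{m}{n}_q$ and $\qbinom{-m}{n}_q=(-1)^nq^{-nm-\binom{n}{2}}\qbinom{m+n-1}{n}_q$ from the preamble, until the sum is brought to a form recognizable as a basic hypergeometric identity. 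As a fallback, I would argue by induction on $d+e$, using a one-step recursion obtained by splitting off the last factor $[i,x]_q$ from $\qbase{i}{x}{d}_q$ and reducing the product of polytope-free factors with the induction hypothesis, at the cost of a somewhat lengthier bookkeeping.
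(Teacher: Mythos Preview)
Your approach is essentially the paper's: reduce the polynomial identity to its values at $x=[k]_q$ and then recognize the resulting $q$-binomial sum as a classical basic hypergeometric summation. The paper carries this out by identifying the specialized identity as an instance of the $q$-Pfaff--Saalsch\"utz formula for a terminating balanced ${}_3\phi_2$ (citing a specific parameter substitution in \cite{zeng_pfaff}).

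One point to sharpen: the identity cannot be a ${}_2\phi_1$/$q$-Chu--Vandermonde evaluation, since after specialization there are three $q$-binomials depending on $\ell$; it is genuinely a ${}_3\phi_2$. The quadratic exponent $q^{(\ell-e-i)(\ell-d-j)}$ that you flag as an obstacle is in fact exactly the signature of a \emph{balanced} ${}_3\phi_2$, which is precisely the hypothesis under which Pfaff--Saalsch\"utz applies. So rather than trying to redistribute it away via the $q\to 1/q$ and negation involutions, you should view it as the cue that Pfaff--Saalsch\"utz is the correct tool; your alternative routes (expansion in the $B$-basis or induction on $d+e$) are unnecessary once this is recognized.
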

\begin{proof}
  As this is an equality of polynomials in $x$, it is enough to check
  that it holds for all positive $q$-integers, namely that
  \begin{equation*}
    \qbinom{i+k}{d}_q \qbinom{j+k}{e}_q = \sum_{0 \leq \ell \leq d+e} q^{(\ell-e-i)(\ell-d-j)} \qbinom{d+j-i}{\ell-i}_q\qbinom{e+i-j}{\ell-j}_q\qbinom{\ell+k}{d+e}_q
  \end{equation*}
  holds for all $k \geq 0$. This equality is in fact an instance of
  the classical Pfaff-Saalsch{\"u}tz identity for the basic
  hypergeometric function ${}_{3}\phi_{2}$. It can be recovered for
  example by letting $d=-j, a=e+i, e=-i, b=d+j, c=-k-1$ in formula
  $(4)$ of \cite{zeng_pfaff}.
\end{proof}

Proposition \ref{h_base} and \ref{gamma} together implies that the subspace $\ano_q$ of $\QQ(q)[x]$ is a
commutative ring over $\ZZ[q,1/q]$.

Let us now turn to a simple symmetry statement, for later use.
\begin{proposition}
  \label{symmetry_of_H_basis}
  For all integers $d,r,k$, the polynomials $\qbase{k}{x}{d}_q$ have
  the following symmetry property:
  \begin{equation}
    \qbase{k}{-q[r,x]_q}{d}_{1/q} = (-1)^d q^{\binom{d+1}{2}} \qbase{r-1+d-k}{x}{d}_q.
  \end{equation}
\end{proposition}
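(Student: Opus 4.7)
My plan is to reduce the claim to a one-factor identity, apply it in parallel to all $d$ factors in the numerator, and then compare the two $q$-factorial denominators.

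The key one-factor identity I would establish first is
\[ [n, -q[r,x]_q]_{1/q} = -q\, [r-n, x]_q. \]
To prove it, one expands $[n, y]_{1/q} = [n]_{1/q} + q^{-n} y$ with $y = -q[r,x]_q = -q[r]_q - q^{r+1} x$, and uses $[n]_{1/q} = q^{1-n}[n]_q$ together with $[n]_q - [r]_q = q^r[n-r]_q$. This collapses the expression to $q^{r-n+1}([n-r]_q - x)$. Applying the reflection $[-m]_q = -q^{-m}[m]_q$ with $m = r-n$ converts $q^{r-n+1}[n-r]_q$ into $-q[r-n]_q$, while the remaining $-q^{r-n+1}x$ equals $-q\cdot q^{r-n}x$; collecting gives $-q\bigl([r-n]_q + q^{r-n}x\bigr) = -q[r-n,x]_q$.

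Given this identity, the numerator of $\qbase{k}{-q[r,x]_q}{d}_{1/q}$ factors as
\[ \prod_{n=k-d+1}^{k} [n, -q[r,x]_q]_{1/q} = (-q)^d \prod_{n=k-d+1}^{k} [r-n, x]_q. \]
As $n$ ranges over $k-d+1, \dots, k$, the index $r-n$ runs through the $d$ consecutive integers $r-k, r-k+1, \dots, r-k+d-1$, which is exactly the block of indices appearing in the numerator of $[d]_q! \cdot \qbase{r-1+d-k}{x}{d}_q$.

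For the denominators, $[d]_{1/q}! = \prod_{j=1}^d q^{1-j}[j]_q = q^{-\binom{d}{2}}\,[d]_q!$. Combining this with the $(-q)^d = (-1)^d q^d$ prefactor produces the overall constant $(-1)^d q^{d + \binom{d}{2}} = (-1)^d q^{\binom{d+1}{2}}$, which is exactly what the statement predicts. The only delicate point is the sign bookkeeping in the reflection $[-m]_q = -q^{-m}[m]_q$ and in the power of $q$ coming from $[d]_{1/q}!$; once the one-factor identity is in hand, everything else is routine.
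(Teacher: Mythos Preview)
Your proof is correct and follows essentially the same route as the paper's own argument: both expand the left-hand side factor by factor, simplify each linear factor $[n,-q[r,x]_q]_{1/q}$ to $-q[r-n,x]_q$, and then account for the power of $q$ coming from $[d]_{1/q}! = q^{-\binom{d}{2}}[d]_q!$. Your presentation is slightly cleaner in that you isolate the one-factor identity explicitly before applying it in parallel, but the underlying computation is identical.
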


\begin{proof}
  This is a simple computation using the definition of these
  polynomials. The left hand side is
  \begin{equation*}
    \frac{[k-d+1,y]_{1/q} \dots [k,y]_{1/q}}{[1]_{1/q}\dots [d]_{1/q}}
  \end{equation*}
  with $y = -q [r,x]_q$. This can be rewritten as
  \begin{equation*}
    \frac{q^{d-k}([k-d+1]_q-[r,x]_q)\dots q^{1-k}([k]_q-[r,x]_q)}{q^{0}[1]_q \dots q^{-d+1}[d]_q}.
  \end{equation*}
  This becomes
  \begin{equation*}
    (-1)^d q^{\binom{d}{2}} \frac{q(q^{r-1+d-k}x+[r-1+d-k]_q) \dots q(q^{r-k}x+[r-k]_q)}{[d]!_q},
  \end{equation*}
  which gives the expected result.
\end{proof}

\section{Operator and linear forms}
\label{step3}

Let us define an endomorphism $\Sigma$ of $\QQ(q)[x]$ by
\begin{equation}
  (\Sigma E)([n]_q) = \sum_{j=0}^{n} q^{j} E([j]_q),
\end{equation}
for all polynomials $E$.

If $E$ is the $q$-Ehrhart polynomial $E_{P,\lambda}$ of a polytope $P$
and linear form $\lambda$, then $\Sigma E$ is the $q$-Ehrhart
polynomial of the pyramid over $P$ as defined at the end of section \ref{step1},
with the linear form $1\oplus \lambda$.

\begin{lemma}
  For every integer $d \geq 0$, there holds
  \begin{equation}
    \label{sigma_on_basis}
    \Sigma \qbase{d}{x}{d}_q = \qbase{d+1}{x}{d+1}_q.
  \end{equation}
\end{lemma}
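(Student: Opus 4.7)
The plan is to verify the identity by comparing both sides on the infinite family of points $x = [n]_q$ for $n \in \NN$. Since both sides are polynomials in $x$ over $\QQ(q)$, agreement on infinitely many points forces them to be equal as polynomials.

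After this reduction, the claim is that for every $n \geq 0$,
\begin{equation*}
\sum_{j=0}^{n} q^{j}\,\qbase{d}{[j]_q}{d}_q = \qbase{d+1}{[n]_q}{d+1}_q,
\end{equation*}
which, using the evaluation identity $\qbase{m}{[k]_q}{n}_q = \qbinom{m+k}{n}_q$ recalled in the notations section, becomes the $q$-hockey-stick identity
\begin{equation*}
\sum_{j=0}^{n} q^{j}\,\qbinom{d+j}{d}_q = \qbinom{d+n+1}{d+1}_q.
\end{equation*}

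I would then prove this by induction on $n$. The base case $n=0$ reduces to $1 = 1$. For the inductive step, assuming the identity for $n-1$, the left hand side at $n$ equals $\qbinom{d+n}{d+1}_q + q^{n}\qbinom{d+n}{d}_q$, and one concludes using the $q$-Pascal rule
\begin{equation*}
\qbinom{m}{k}_q = \qbinom{m-1}{k}_q + q^{m-k}\qbinom{m-1}{k-1}_q
\end{equation*}
applied with $m = d+n+1$ and $k = d+1$, which gives exactly $\qbinom{d+n+1}{d+1}_q$ on the right.

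There is no real obstacle here; the only mild subtlety is choosing the correct version of the $q$-Pascal rule (with the power $q^{m-k}$ on the lower-index term, rather than the variant with $q^{k}$ on the upper-index term), so that the $q^n$ prefactor matches. Alternatively, one could bypass the induction entirely by recognising the sum as a telescoping expression after writing $q^{j}\qbinom{d+j}{d}_q = \qbinom{d+j+1}{d+1}_q - \qbinom{d+j}{d+1}_q$, which is a restatement of the same $q$-Pascal identity.
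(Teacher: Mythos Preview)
Your proposal is correct and follows essentially the same approach as the paper: both reduce to evaluating at $q$-integers and obtain the identity $\sum_{j=0}^{n} q^{j}\qbinom{d+j}{d}_q = \qbinom{d+n+1}{d+1}_q$. The paper simply cites this as a classical formula with a combinatorial interpretation via lattice paths, while you supply an explicit inductive (equivalently, telescoping) proof using the $q$-Pascal rule; your choice of the variant $\qbinom{m}{k}_q = \qbinom{m-1}{k}_q + q^{m-k}\qbinom{m-1}{k-1}_q$ is indeed the correct one here.
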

\begin{proof}
  As an equality between polynomials in $x$, it is enough to check
  that it holds for every positive $q$-integer $k$. This becomes
  \begin{equation*}
    \label{sum_qbinom_1}
    \sum_{j=0}^{k} q^j \qbinom{d+j}{d}_q = \qbinom{d+1+k}{d+1}_q.
  \end{equation*}
  This is a classical formula, which has a simple combinatorial proof
  using the description of $q$-binomials by paths in a rectangle
  according to their area.
\end{proof}
Note that property \eqref{sigma_on_basis} uniquely defines the linear
operator $\Sigma$. This also proves that it acts on the subring $\ano_q$.

\begin{lemma}
  For all integers $i$ and $d$ with $0 \leq i \leq d$, there holds
  \begin{equation}
    \label{sigma_qbase}
    \Sigma \qbase{i}{x}{d}_q = q^{d-i} (\qbase{i+1}{x}{d+1}_q - \qbinom{i}{d+1}_q).
  \end{equation}
\end{lemma}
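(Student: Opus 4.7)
The plan is to follow the same template as the proof of \eqref{sigma_on_basis}: since both sides are polynomials in $x$, it suffices to verify the identity after evaluating at every $q$-integer $[k]_q$ with $k\ge 0$. Using the definition of $\Sigma$ together with the evaluation rule $\qbase{i}{[j]_q}{d}_q = \qbinom{i+j}{d}_q$, the left-hand side at $x = [k]_q$ becomes $\sum_{j=0}^{k} q^j \qbinom{i+j}{d}_q$, so the whole statement reduces to a single $q$-binomial sum identity.

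I would then reindex by $j' = i+j$, obtaining $q^{-i}\sum_{j'=i}^{i+k} q^{j'}\qbinom{j'}{d}_q$. Since $0\le i\le d$, the $q$-binomials $\qbinom{j'}{d}_q$ vanish for $j' < d$, so this equals $q^{-i}\sum_{j'=d}^{i+k} q^{j'}\qbinom{j'}{d}_q$. Substituting $j'' = j' - d$ and applying the classical identity already invoked for \eqref{sigma_on_basis}, namely $\sum_{j''=0}^{m} q^{j''}\qbinom{d+j''}{d}_q = \qbinom{d+1+m}{d+1}_q$ with $m = i+k-d$, yields $q^{d-i}\qbinom{i+k+1}{d+1}_q$.

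To match the right-hand side, I would finally remark that $\qbinom{i}{d+1}_q = 0$ throughout the stated range $0\le i\le d$, so that the right-hand side evaluated at $[k]_q$ is $q^{d-i}\bigl(\qbinom{i+k+1}{d+1}_q - \qbinom{i}{d+1}_q\bigr) = q^{d-i}\qbinom{i+k+1}{d+1}_q$, which agrees with the computation above. There is no real obstacle here: the whole argument is a reindexed application of the identity already used for \eqref{sigma_on_basis}. The only mildly surprising feature is the extra term $\qbinom{i}{d+1}_q$ on the right-hand side, which vanishes in the range considered but is apparently included so that the formula extends cleanly beyond $0\le i\le d$ or is convenient for later use.
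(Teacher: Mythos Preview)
Your proof is correct and follows essentially the same route as the paper: both reduce to checking the identity at $x=[k]_q$ and appeal to the same classical summation $\sum_{j=0}^{m} q^{j}\qbinom{d+j}{d}_q=\qbinom{d+1+m}{d+1}_q$. The only cosmetic difference is that the paper obtains the right-hand side by telescoping this classical identity between the upper limits $i-1$ and $i+k$ (so the term $\qbinom{i}{d+1}_q$ appears naturally), whereas you drop the vanishing terms $\qbinom{j'}{d}_q$ for $j'<d$ and then note separately that $\qbinom{i}{d+1}_q=0$ in the range $0\le i\le d$.
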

\begin{proof}  
  To prove this equality of polynomials in $x$, it is enough to check
  the statement for every positive $q$-integer $[k]_q$. This becomes
  \begin{equation*}
    \sum_{j=0}^{k} q^j \qbinom{i+j}{d}_q = q^{d-i} (\qbinom{i+1+k}{d+1}_q - \qbinom{i}{d+1}_q).
  \end{equation*}
  This holds because
  \begin{equation*}
    \sum_{j=0}^{k} q^{j-d} \qbinom{j}{d}_q = \qbinom{k+1}{d+1}_q,
  \end{equation*}
  which is a classical formula, equivalent to \eqref{sum_qbinom_1}.
\end{proof}

Let us define next a linear form $V$ from $\ano_q$ to $\QQ(q)$ by
\begin{equation}
  V(E) = \lim_{x \to [-1]_q} \frac{E(x)-E([-1]_q)}{1 + q x}.
\end{equation}
Note that $[-1]_q = -1/q$. This operator is therefore essentially the
derivative of $E$ at $x = [-1]_q$, up to a multiplicative factor of $q$.

Let us now define another linear form $\Psi$ on $\ano_q$ by the
composition
\begin{equation}
  \Psi(E) = V \Sigma E.
\end{equation}
One will later study the values of the linear form $\Psi$ on the
$q$-Ehrhart polynomials of Gorenstein polytopes.

Let us first compute the values of $\Psi$ on the basis elements.
\begin{proposition}
  \label{beta}
  For all integers $0 \leq i \leq d$, there holds
  \begin{equation}
    \Psi(\qbase{i}{x}{d}_q) = \frac{(-1)^{d-i}q^{-\binom{d-i}{2}}}{[d+1]_q \qbinom{d}{i}_q}.
  \end{equation}
\end{proposition}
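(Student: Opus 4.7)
The plan is to reduce the computation of $\Psi = V\Sigma$ on a basis element to an evaluation of a single polynomial at $x=[-1]_q=-1/q$, using the formula for $\Sigma$ already available.

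First, I would apply \eqref{sigma_qbase} to get
\begin{equation*}
\Psi(\qbase{i}{x}{d}_q) = V\!\left(q^{d-i}\bigl(\qbase{i+1}{x}{d+1}_q-\qbinom{i}{d+1}_q\bigr)\right) = q^{d-i}\,V(\qbase{i+1}{x}{d+1}_q),
\end{equation*}
since $V$ annihilates constants. So everything boils down to computing $V(\qbase{i+1}{x}{d+1}_q)$.

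Next I would use the key numerical observation that for every integer $k$,
\begin{equation*}
[k,x]_q\big\vert_{x=-1/q} \;=\; [k]_q - q^{k-1} \;=\; [k-1]_q.
\end{equation*}
Since $0\le i\le d$, the index $k=1$ lies in the range $\{i-d+1,\dots,i+1\}$, so among the $d+1$ factors of the product defining $\qbase{i+1}{x}{d+1}_q$, exactly one factor is $[1,x]_q=1+qx$. This means the polynomial vanishes at $x=-1/q$, and moreover $V(\qbase{i+1}{x}{d+1}_q)$ is obtained simply by cancelling that factor against the denominator $1+qx$ in the definition of $V$, then substituting $x=-1/q$ in the remaining $d$ factors. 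Applying the identity above to each surviving factor gives
\begin{equation*}
V(\qbase{i+1}{x}{d+1}_q) \;=\; \frac{1}{[d+1]!_q}\prod_{\substack{j=i-d\\ j\ne 0}}^{i}[j]_q.
\end{equation*}

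Finally, I would split this product at $j=0$ into the positive part $\prod_{j=1}^{i}[j]_q=[i]!_q$ and the negative part $\prod_{j=i-d}^{-1}[j]_q$, which by $[-n]_q=-q^{-n}[n]_q$ equals $(-1)^{d-i}q^{-\binom{d-i+1}{2}}[d-i]!_q$. Combining with the prefactor $q^{d-i}$ and simplifying the exponent via
\begin{equation*}
(d-i)-\binom{d-i+1}{2} \;=\; -\binom{d-i}{2},
\end{equation*}
and recognizing $[i]!_q[d-i]!_q/[d+1]!_q = 1/\bigl([d+1]_q\qbinom{d}{i}_q\bigr)$, yields the claimed formula. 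The only non-routine step is bookkeeping the sign and the $q$-exponent in the negative part of the product, but this is a direct computation.
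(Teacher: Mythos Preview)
Your proof is correct and follows essentially the same route as the paper: apply \eqref{sigma_qbase}, observe that the factor $[1,x]_q=1+qx$ in $\qbase{i+1}{x}{d+1}_q$ cancels against the denominator in $V$, and evaluate the remaining factors at $x=[-1]_q$ to obtain the product $([i-d]_q\cdots[-1]_q)([1]_q\cdots[i]_q)/[d+1]!_q$. Your explicit identity $[k,x]_q\big\vert_{x=-1/q}=[k-1]_q$ makes the evaluation step slightly cleaner than the paper's terse appeal to the derivative, but the argument is the same.
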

\begin{proof}
  Using formula \eqref{sigma_qbase}, one can compute
  \begin{equation*}
    \Psi \qbase{i}{x}{d}_q 
    = q^{d-i} V \left( \qbase{i+1}{x}{d+1}_q - \qbinom{i}{d+1}_q  \right)
    = q^{d-i} V \left( \frac{[i-d+1,x]_q \dots [i+1,x]_q }{[d+1]!_q}-\frac{ [i-d]_q\dots [i]_q}{[d+1]!_q}\right).
  \end{equation*}
  By definition, the operator $V$ is proportional to the derivative at
  $[-1]_q$. This implies that one gets
  \begin{equation*}
    q^{d-i} \frac{([i-d]_q \dots [-1]_q) ([1]_q\dots [i]_q )}{[d+1]!_q},
  \end{equation*}
  which can be readily rewritten as the expected result.
\end{proof}

From these values, one deduces the following lemma.
\begin{lemma}
  \label{psi_and_shift}
  For every polynomial $E \in \QQ(q)[x]$, there holds
  \begin{equation}
    q \Psi(E(1+q x)) - \Psi(E) = (q-1) E(0) + \partial_x E(0).
  \end{equation}
\end{lemma}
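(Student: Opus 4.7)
The plan is to reduce the identity to a pair of simpler facts about $\Sigma$: an intertwining with the shift $E(x) \mapsto E(1+qx)$, and a functional equation relating $\Sigma E$ at the two points $0$ and $-1/q$. Both sides of the stated identity are linear in $E$, and $V$, $\Sigma$, $\Psi$ all extend unambiguously from $\ano_q$ to $\QQ(q)[x]$ (using $V(F) = \partial_x F(-1/q)/q$), so I may work with arbitrary polynomials.

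The first step is to establish the intertwining relation
\begin{equation*}
q\,\Sigma\bigl(E(1+qx)\bigr) = (\Sigma E)(1+qx) - E(0).
\end{equation*}
Using $[n+1]_q = 1 + q[n]_q$ and evaluating at $x = [n]_q$ for each $n\ge 0$, both sides collapse to $\sum_{k=1}^{n+1} q^k E([k]_q)$, so the equality holds as polynomials. Applying $V$, which annihilates constants, one obtains
\begin{equation*}
q\,\Psi\bigl(E(1+qx)\bigr) - \Psi(E) = V\bigl((\Sigma E)(1+qx)\bigr) - V(\Sigma E).
\end{equation*}

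The second step would translate this into derivatives of $\Sigma E$. For any polynomial $F$, the change of variable $u = 1+qx$ inside the definition of $V$ shows $V\bigl(F(1+qx)\bigr) = \partial_x F(0)$, while by definition $V(F) = \partial_x F(-1/q)/q$. Applied to $F = \Sigma E$, the right-hand side above becomes $\partial_x(\Sigma E)(0) - \partial_x(\Sigma E)(-1/q)/q$.

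The third step would invoke a functional equation for $\Sigma E$ itself. The defining recursion $(\Sigma E)([n]_q) - (\Sigma E)([n-1]_q) = q^n E([n]_q)$ translates, via $[n-1]_q = ([n]_q - 1)/q$ and $q^n = 1 + (q-1)[n]_q$, into
\begin{equation*}
(\Sigma E)(x) - (\Sigma E)\bigl((x-1)/q\bigr) = \bigl(1+(q-1)x\bigr)\,E(x).
\end{equation*}
Differentiating with respect to $x$ and setting $x = 0$ gives exactly $\partial_x(\Sigma E)(0) - \partial_x(\Sigma E)(-1/q)/q = (q-1)E(0) + \partial_x E(0)$, which combined with the previous step yields the lemma. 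The main obstacle is spotting the decomposition into these three ingredients; once isolated, each is a short formal verification.
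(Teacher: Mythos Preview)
Your proof is correct and takes a genuinely different route from the paper's. The paper verifies the identity on the $B$-basis elements $\qbase{d}{x}{d}_q$: it invokes Proposition~\ref{beta} to get $\Psi(\qbase{d}{x}{d}_q)=1/[d+1]_q$, then uses the shift relation $E(1+qx)=\qbase{d+1}{x}{d}_q$ together with \eqref{sigma_qbase} to compute $q\Psi(E(1+qx))=V(\qbase{d+2}{x}{d+1}_q)=\sum_{j=1}^{d+1} q^j/[j]_q$, and matches this against $\partial_x E(0)=\sum_{j=1}^{d} q^j/[j]_q$. So the paper's argument rests on explicit evaluations that in turn depend on the machinery of \S\ref{step3}.

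Your argument is more intrinsic: the three ingredients (the intertwining $q\,\Sigma(E(1+qx))=(\Sigma E)(1+qx)-E(0)$, the identification $V(F(1+qx))=\partial_xF(0)$, and the difference equation $(\Sigma E)(x)-(\Sigma E)((x-1)/q)=(1+(q-1)x)E(x)$) are each immediate from the \emph{definitions} of $\Sigma$ and $V$, checked on $q$-integers, and require neither Proposition~\ref{beta} nor \eqref{sigma_qbase}. This makes the lemma logically independent of those computations and arguably explains \emph{why} it holds, at the cost of having to spot the decomposition. A minor expository point: in Step~3 you verify the functional equation at $x=[n]_q$ for $n\geq 1$; it is worth saying explicitly that this suffices because both sides are polynomials (of degree $\deg E+1$) agreeing at infinitely many points.
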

\begin{proof}
  As both sides are linear in $E$, it is enough to check this identity
  for every basis element $E = \qbase{d}{x}{d}_q$. First note that
  $\Psi(E)=\frac{1}{[d+1]_q}$ by proposition \ref{beta}. Then using
  \eqref{sigma_qbase}, one computes
  \begin{equation*}
    q \Psi(E(1+q x)) = q V \Sigma (E(1+q x)) = V (\qbase{d+2}{x}{d+1} - \qbinom{d+1}{d+1}) =  V (\qbase{d+2}{x}{d+1}).
  \end{equation*}
  By a direct computation using that $V$ is proportional to the
  derivative at $[-1]_q$, this is $ \sum_{j=1}^{d+1}
  \frac{q^j}{[j]_q}$.  The same computation gives that $
    \partial_x E(0) = \sum_{j=1}^{d} \frac{q^j}{[j]_q}$.
  The result follows.
\end{proof}

Let us now introduce the $q$-Bernoulli numbers of Carlitz by the formula
\begin{equation}
  \label{defiB}
  \Psi(x^n) = B_{q,n},
\end{equation}
for $n \geq 0$.

These rationals fractions, introduced by Carlitz in
\cite{carlitz_beta}, are $q$-analogues of the Bernoulli numbers with
nice properties. In particular, they only have simples poles at non-trivial
roots of unity, and their value at $q=1$ are the classical Bernoulli
numbers. To see that \eqref{defiB} gives the same definition as
Carlitz one, one can use lemma \ref{psi_and_shift} applied to the
monomials $x^n$.

Let us now go back to the study of $\Psi$. One will need the following
result later.
\begin{proposition}
  \label{alpha}
  For all integers $0 \leq i \leq d$ and $0 \leq j \leq e$, there holds
  \begin{equation}
    \Psi(\qbase{i}{x}{d}_q\qbase{j}{x}{e}_q) = \frac{(-1)^{d-i+e-j}q^{-\binom{d-i}{2}+(d-i)(e-j)-\binom{e-j}{2}}}{[d+e+1]_q \qbinom{d+e}{d-i+j}_q}.
  \end{equation}
\end{proposition}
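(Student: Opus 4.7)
The strategy is to expand the product $\qbase{i}{x}{d}_q\qbase{j}{x}{e}_q$ in the $H$-basis of $\ano^{(d+e)}_q$ via Proposition \ref{gamma}, apply $\Psi$ term-by-term by linearity, evaluate each basis term using Proposition \ref{beta}, and finally recognize the resulting single sum as an instance of a classical $q$-hypergeometric identity.

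Concretely, Proposition \ref{gamma} gives
\begin{equation*}
  \qbase{i}{x}{d}_q\qbase{j}{x}{e}_q = \sum_{\ell=0}^{d+e} q^{(\ell-e-i)(\ell-d-j)}\qbinom{d+j-i}{\ell-i}_q\qbinom{e+i-j}{\ell-j}_q\qbase{\ell}{x}{d+e}_q,
\end{equation*}
and then Proposition \ref{beta} yields
\begin{equation*}
  \Psi\bigl(\qbase{i}{x}{d}_q\qbase{j}{x}{e}_q\bigr) = \frac{1}{[d+e+1]_q}\sum_{\ell} (-1)^{d+e-\ell}\, q^{(\ell-e-i)(\ell-d-j)-\binom{d+e-\ell}{2}} \frac{\qbinom{d+j-i}{\ell-i}_q\qbinom{e+i-j}{\ell-j}_q}{\qbinom{d+e}{\ell}_q}.
\end{equation*}
After re-indexing (for instance $k = \ell - \min(i,j)$) and converting $q$-binomials to $q$-Pochhammer symbols, the sum takes the shape of a Saalschützian ${}_3\phi_2$ with parameters naturally governed by $a = d-i$ and $b = e-j$. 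Applying the $q$-Pfaff–Saalschütz identity (as in \cite{gasper} or \cite{zeng_pfaff}) collapses the sum to a single ratio of $q$-factorials, and rewriting in terms of $q$-binomials produces the claimed closed form $\frac{(-1)^{d-i+e-j}q^{-\binom{d-i}{2}+(d-i)(e-j)-\binom{e-j}{2}}}{[d+e+1]_q\qbinom{d+e}{d-i+j}_q}$.

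The principal obstacle is the bookkeeping in the last step: matching exponents of $q$ and signs, and checking that the $q$-parameters meet the Saalschützian balancing condition. Two sanity checks support the plan. First, when $i = d = 0$ one has $\qbase{0}{x}{0}_q = 1$ and the sum collapses to a single term matching Proposition \ref{beta}. Second, the asserted right-hand side is invariant under $(i,d) \leftrightarrow (j,e)$, using $\qbinom{d+e}{d-i+j}_q = \qbinom{d+e}{e-j+i}_q$, consistent with the commutativity of the product in $\ano_q$.
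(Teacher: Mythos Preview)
Your approach is essentially the same as the paper's: expand the product via Proposition \ref{gamma}, apply $\Psi$ termwise using Proposition \ref{beta}, and reduce the resulting single sum by a ${}_3\phi_2$ summation. The paper packages that final reduction as a separate Lemma \ref{nut} (citing a formula from Gasper's Appendix III rather than $q$-Pfaff--Saalsch\"utz directly), but this is only a cosmetic difference.
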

\begin{proof}
  Let us compute $\Psi(\qbase{i}{x}{d}_q\qbase{j}{x}{e}_q)$. By
  proposition \ref{gamma}, this is
  \begin{equation*}
    \sum_{0 \leq \ell \leq d+e} q^{(\ell-e-i)(\ell-d-j)} \qbinom{d+j-i}{\ell-i}_q\qbinom{e+i-j}{\ell-j}_q\Psi(\qbase{\ell}{x}{d+e}_q).
  \end{equation*}

  By proposition \ref{beta}, this is 
  \begin{equation*}
    \sum_{0 \leq \ell \leq d+e} q^{(\ell-e-i)(\ell-d-j)} \qbinom{d+j-i}{\ell-i}_q\qbinom{e+i-j}{\ell-j}_q\frac{(-1)^{d+e-\ell}q^{-\binom{d+e-\ell}{2}}}{[d+e+1]_q \qbinom{d+e}{\ell}}.
  \end{equation*}

  Using lemma \ref{nut}, this becomes the expected result.
\end{proof}

\begin{lemma}
  \label{nut}
  Let $0 \leq i \leq d$ and $0 \leq j \leq e$ be integers. Then
  \begin{equation*}
    \sum_{0 \leq \ell \leq d+e} (-1)^\ell q^{(\ell-e-i)(\ell-d-j)-\binom{d+e-\ell}{2}}\frac{\binom{e}{\ell-i}_q\binom{d}{\ell-j}_q}{\binom{d+e}{\ell}_q} = \frac{(-1)^{i+j} q^{-\binom{d-i}{2}+(d-i)(e-j)-\binom{e-j}{2}}}{ \binom{d+e}{d}_q}.
  \end{equation*}
\end{lemma}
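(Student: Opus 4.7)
The plan is to recognize this identity as a terminating balanced $q$-hypergeometric summation and reduce it, as in Proposition \ref{gamma}, to the $q$-Pfaff--Saalsch\"utz identity for ${}_3\phi_2$. Both sides are manifestly symmetric under the swap $(d,i)\leftrightarrow(e,j)$ (the quadratic exponent $-\binom{d-i}{2}+(d-i)(e-j)-\binom{e-j}{2}$, the sign $(-1)^{i+j}$, the $q$-binomial $\qbinom{d+e}{d}_q$, and the left-hand summand are all invariant under this swap), so I would begin by assuming without loss of generality that $j\le i$.

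First, I would shift the summation variable by $\ell = i+m$, so that $\qbinom{e}{\ell-i}_q=\qbinom{e}{m}_q$ and $\qbinom{d}{\ell-j}_q=\qbinom{d}{m+i-j}_q$, and the range of $m$ becomes $0\le m\le \min(e,d+j-i)$, with $\qbinom{d}{m+i-j}_q=0$ automatically outside. I would then expand each $q$-binomial as a ratio of $q$-Pochhammer symbols, pull out the $m$-independent prefactor, and rewrite the remaining sum in the standard form
\begin{equation*}
\sum_m \frac{(A;q)_m (B;q)_m (C;q)_m}{(D;q)_m (E;q)_m (q;q)_m}\, z^m.
\end{equation*}
The quadratic exponent $(\ell-e-i)(\ell-d-j)-\binom{d+e-\ell}{2}$ becomes, after the substitution, a sum of an $m$-independent part, a linear-in-$m$ part (absorbed into the base $z$), and a $\binom{m}{2}$-part (which combines with one Pochhammer using the usual $(q^{-n};q)_m=(-1)^m q^{\binom{m}{2}-nm}(q;q)_n/(q;q)_{n-m}$ trick). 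With care, one of the upper parameters comes out as $q^{-e}$ or $q^{j-i-d}$, ensuring termination, and the balancing relation $DE=qABC/z$ needed for Pfaff--Saalsch\"utz should hold.

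Once the sum is identified as an instance of $q$-Pfaff--Saalsch\"utz, I would apply the evaluation (see for instance \cite[Appendix II, (II.12)]{gasper}), translate the resulting Pochhammer quotient back into $q$-binomials, and multiply by the prefactor extracted above; the result should collapse exactly to the right-hand side $(-1)^{i+j}q^{-\binom{d-i}{2}+(d-i)(e-j)-\binom{e-j}{2}}/\qbinom{d+e}{d}_q$.

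The main obstacle is purely combinatorial bookkeeping of the $q$-powers: the quadratic form $(\ell-e-i)(\ell-d-j)-\binom{d+e-\ell}{2}$ must be cleanly split into its three pieces (constant, linear in $m$, triangular in $m$), and then reassembled after Pfaff--Saalsch\"utz to match the prescribed exponent of the right-hand side. If this direct route proves too opaque, a robust fallback is induction on $e$ using the $q$-Pascal rule $\qbinom{e}{k}_q=\qbinom{e-1}{k-1}_q+q^k\qbinom{e-1}{k}_q$: one verifies the base case $e=0$ (where the sum contains the single term $\ell=j$ and both sides are readily seen to agree) and checks that the recursion obtained on the left-hand side matches the corresponding two-term recursion on the right-hand side.
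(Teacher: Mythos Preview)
Your proposal takes essentially the same route as the paper: use the $(d,i)\leftrightarrow(e,j)$ symmetry to assume $i\ge j$, rewrite the sum as a terminating ${}_3\phi_2$, and invoke a standard identity from Gasper--Rahman. The only difference is that the paper cites the transformation formula \cite[Appendix~III, (III.10)]{gasper} rather than the $q$-Pfaff--Saalsch\"utz summation (II.12); these are in the same circle of ${}_3\phi_2$ identities, so your reduction is sound and equivalent in spirit to the paper's.
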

\begin{proof}
  One can assume without loss of generality that $i \geq j$. This can
  then be reformulated as an hypergeometric identity for the function
  ${}_3\phi_{2} $. This formula can be deduced from \cite[Appendix III, formula (III.10)]{gasper}.
\end{proof}

\section{Symmetry of coefficients and self-reciprocal values}

\label{main}

Let $P$ be an $r$-Gorenstein lattice polytope of dimension $D$. Let
$E(x,q)$ be its $q$-Ehrhart polynomial with respect to a linear form
$\lambda$.

Let $d$ be the degree of $E(x,q)$. Using proposition \ref{h_base}, let
us write $E(x,q)$ as follows:
\begin{equation}
  \label{coeff_E}
  E(x,q) = \sum_{j=0}^{d} c_j \qbase{j}{x}{d},
\end{equation}
for some coefficients $c_j$ in $\QQ(q)$.

\begin{proposition}
  \label{sym_and_vanish}
  The coefficients $c_k$ vanish for $0 \leq k \leq r-2$. Moreover
  \begin{equation}
    \label{sym_coeff_gorenstein}
    c_k = (-1)^{D+d} q^{\binom{d+1}{2}-\lambda(z_0)} c_{r-1+d-k}(1/q).
  \end{equation}
\end{proposition}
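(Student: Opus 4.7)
The plan is to handle the two claims in succession: first the vanishing $c_k = 0$ for $k \leq r-2$, then the symmetry, which follows almost immediately once the vanishing is in hand. Both steps combine Proposition \ref{symmetry_of_qehr} with the basis transformation of Proposition \ref{symmetry_of_H_basis}.

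For the vanishing, I would use $q$-Ehrhart reciprocity as in the proof of Proposition \ref{symmetry_of_qehr}. The standard fact that an $r$-Gorenstein polytope satisfies $\intr(nP) \cap \ZZ^D = \emptyset$ for $n = 1, \ldots, r-1$ (equivalent to the symmetric $h^*$-vector having degree $D+1-r$, i.e.\ codegree equal to $r$) yields $E([-n]_q, q) = 0$ for these values of $n$. Substituting $x = [-n]_q$ in $E(x,q) = \sum_{k=0}^d c_k \qbase{k}{x}{d}_q$ produces $\sum_{k=0}^d c_k \qbinom{k-n}{d}_q = 0$ for $n = 1, \ldots, r-1$. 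The coefficient $\qbinom{k-n}{d}_q$ vanishes exactly when $n \leq k \leq n+d-1$, since a factor $[0]_q$ then appears in the numerator; so in the range $0 \leq k \leq d$ only the indices $k < n$ contribute. This leaves an upper-triangular linear system in $c_0, \ldots, c_{r-2}$ with nonzero diagonal entry $\qbinom{-1}{d}_q$, forcing $c_0 = \cdots = c_{r-2} = 0$ inductively as $n$ runs through $1, \ldots, r-1$.

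For the symmetry, the vanishing reduces $E$ to $\sum_{k=r-1}^d c_k \qbase{k}{x}{d}_q$. Plugging this into Proposition \ref{symmetry_of_qehr} and applying Proposition \ref{symmetry_of_H_basis} termwise turns $\qbase{j}{-q[r,x]_q}{d}_{1/q}$ into $(-1)^d q^{\binom{d+1}{2}} \qbase{r-1+d-j}{x}{d}_q$, introducing an overall prefactor $(-1)^{D+d} q^{\binom{d+1}{2}-\lambda(z_0)}$. As $j$ runs over $\{r-1, \ldots, d\}$, the reindexed value $k = r-1+d-j$ runs over the same set in reverse order. Comparing the two expansions of $E$ in the family $\{\qbase{k}{x}{d}_q : k = r-1, \ldots, d\}$, which is linearly independent as a subset of the basis supplied by Proposition \ref{h_base}, one can equate coefficients to obtain $c_k = (-1)^{D+d} q^{\binom{d+1}{2}-\lambda(z_0)} c_{r-1+d-k}(1/q)$.

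The main obstacle is the index bookkeeping. Without the prior vanishing, the reindexed sum from Proposition \ref{symmetry_of_H_basis} would produce basis elements $\qbase{k}{x}{d}_q$ with $k \in \{r-1, \ldots, r-1+d\}$, overshooting the original range $\{0, \ldots, d\}$. The vanishing of $c_0, \ldots, c_{r-2}$ is precisely what collapses both sides onto the common subset $\{r-1, \ldots, d\}$, on which linear independence allows coefficients to be uniquely identified.
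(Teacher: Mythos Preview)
Your proof is correct and follows essentially the same approach as the paper: the vanishing of $c_0,\dots,c_{r-2}$ via the zeros of $E$ at $[-1]_q,\dots,[1-r]_q$ and a triangular argument (which the paper compresses to ``by an easy induction''), followed by the symmetry obtained by combining Proposition~\ref{symmetry_of_qehr} with Proposition~\ref{symmetry_of_H_basis} and comparing coefficients in the basis of Proposition~\ref{h_base}. Your added remark that the vanishing is precisely what keeps the reindexed sum inside the range $\{r-1,\dots,d\}$ is a helpful clarification not spelled out in the paper.
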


\begin{proof}
  Because $P$ is an $r$-Gorenstein polytope, the dilated polytopes $k
  P$ have an empty interior if $1 \leq k \leq r-1$. This implies that
  $E(x,q)$ vanishes at the $q$-integers $[-1]_q,\dots,[1-r]_q$. This
  in turn implies the vanishing of the coefficients $c_0, \dots,
  c_{r-2}$ (by an easy induction).

  Let us now show that the symmetry property of proposition
  \ref{symmetry_of_H_basis} together with the symmetry property of
  proposition \ref{symmetry_of_qehr} implies the expected symmetry of
  the coefficients. One computes
  \begin{multline}
    (-1)^D E(-q[r,x]_q,1/q) q^{-\lambda(z_0)} 
=   (-1)^D \sum_{k=r-1}^{d} c_k(1/q) \qbase{k}{-q[r,x]_q}{d}_{1/q} q^{-\lambda(z_0)}\\
 =  (-1)^D \sum_{k=r-1}^{d} c_k(1/q) (-1)^d q^{\binom{d+1}{2}}\qbase{r-1+d-k}{x}{d}_{q} q^{-\lambda(z_0)}   \\
 =  (-1)^D \sum_{k=r-1}^{d} c_{r-1+d-k}(1/q) (-1)^d q^{\binom{d+1}{2}}\qbase{k}{x}{d}_{q} q^{-\lambda(z_0)}.
  \end{multline}
  One then identifies the coefficients with \eqref{coeff_E} to get the expected equality.
\end{proof}

This statement is a $q$-analogue of the usual symmetry $c_k =
(-1)^{D+d} c_{r-1+d-k}$ for $r$-Gorenstein polytopes. In the classical
setting, the numbers $c_k$ are the coefficients of the numerator of
the Ehrhart series ($h$-vector).

\bigskip

Let now $r \geq 1$ be a fixed integer. Let $P$ and $Q$ be two
$r$-Gorenstein lattice polytopes of dimensions $D$ and $E$. Let $E_P$
and $E_Q$ be their $q$-Ehrhart polynomials with respect to some linear
forms $\lambda$ and $\mu$ (omitted to keep the notation short). Let
$d$ and $e$ be the degrees of these polynomials.

Let us introduce the shortcuts $Z=\lambda(z_0)$ and $Z'=\mu(z'_0)$
where $z_0$ and $z'_0$ are the unique interior points in the dilated
polytopes $r P$ and $r Q$. Let us write
\begin{equation}
  \label{EP_EQ_dans_base_H}
  E_p = \sum_{0 \leq i\leq d} c_i \qbase{i}{x}{d}_q \quad\text{and}\quad E_Q= \sum_{0\leq j\leq e} c'_j  \qbase{j}{x}{e}_q.
\end{equation}

Let $\sh_{-k}$ be the shift (with offset $-k$) defined by $\sh_{-k}(P)([n]_q)
= P([n-k]_q)$ for all $n \in \ZZ$ or equivalently by $(\sh_{-k} P)(x) = P([-k,x])$. Then one has
\begin{equation}
  \sh_{-k} E_p = \sum_{0 \leq i\leq d} c_{i+k} \qbase{i}{x}{d}_q \quad\text{and}\quad \sh_{-k} E_Q= \sum_{0\leq j\leq e} c'_{j+k}  \qbase{j}{x}{e}_q,
\end{equation}
for all $0 \leq k \leq r-1$ (using the vanishing statement in
proposition \ref{sym_and_vanish}).

\begin{theorem}
  \label{th_gorenstein}
  For all $0 \leq k \leq r-1$, the value at $1/q$ of the fraction
  $q^{-k} \Psi(\sh_{-k} (E_P E_Q))$ is $(-1)^{D+E} q^{Z+Z'+r-1}$ times itself.
\end{theorem}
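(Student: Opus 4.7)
The plan is a direct computation using propositions \ref{sym_and_vanish} and \ref{alpha}. The translation property of the $\qbase{\cdot}{x}{\cdot}_q$ basis gives $\sh_{-k}\qbase{i}{x}{d}_q = \qbase{i-k}{x}{d}_q$, so expanding via \eqref{EP_EQ_dans_base_H} yields
\[
\sh_{-k}(E_P E_Q) = \sum_{i,j} c_i c'_j \, \qbase{i-k}{x}{d}_q \qbase{j-k}{x}{e}_q,
\]
and proposition \ref{alpha} provides the value of $\Psi$ on each product term explicitly, producing a closed-form double sum for $\Psi(\sh_{-k}(E_P E_Q))$.

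The next step is to evaluate the resulting sum at $1/q$, using the standard transformations $[n]_{1/q}=q^{-n+1}[n]_q$ and $\qbinom{m}{n}_{1/q}=q^{-n(m-n)}\qbinom{m}{n}_q$ on the denominator, together with the Gorenstein symmetry $c_i(1/q) = (-1)^{D+d} q^{Z-\binom{d+1}{2}} c_{r-1+d-i}$ and its analogue for $c'_j$ from proposition \ref{sym_and_vanish}. The change of variables $i'=r-1+d-i$, $j'=r-1+e-j$ permutes the nonvanishing summation range. I then compare the resulting expression for $q^k\Psi(\sh_{-k}(E_P E_Q))|_{1/q}$ with $(-1)^{D+E} q^{Z+Z'+r-1-k} \Psi(\sh_{-k}(E_P E_Q))|_q$ term by term in the product $c_{i'}c'_{j'}$.

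The $\qbinom{d+e}{\cdot}_q$ factors match via the complement symmetry $\qbinom{d+e}{a}_q = \qbinom{d+e}{d+e-a}_q$ (with $a = d-i'+j'$), and the signs match since $(-1)^{2(i'+j')}=1$. What remains is a single polynomial identity among quadratic $q$-exponents. Setting $p=d-i'$ and $s=e-j'$, it reduces, after collecting all contributions, to the algebraic identity
\[
(d-e-p+s)^2 + (p-s)^2 + 2(d-p+s)(e+p-s) = d^2 + e^2,
\]
which expands immediately to $(d-e)^2 + 2de = d^2+e^2$.

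The main obstacle is purely bookkeeping in the second step: four distinct quadratic contributions to the $q$-exponent — two from proposition \ref{alpha}, one from the $q\to 1/q$ transformation of the denominator, and one from the $\binom{d+1}{2}+\binom{e+1}{2}$ appearing in the Gorenstein symmetries of the coefficients — must be collected and recast in the $(p,s)$ variables before the sum-of-squares identity becomes visible. Beyond this, the argument requires no new hypergeometric input, since the only such identity used (in proposition \ref{alpha}) has already been absorbed.
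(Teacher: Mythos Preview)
Your proposal is correct and follows essentially the same route as the paper: expand $\sh_{-k}(E_P E_Q)$ in the $\qbase{\cdot}{x}{\cdot}_q$ basis, apply proposition~\ref{alpha}, send $q\mapsto 1/q$, invoke the Gorenstein symmetry of proposition~\ref{sym_and_vanish}, reindex, and verify a quadratic identity in the $q$-exponents. The only cosmetic difference is that the paper shifts the coefficients (writing $\sh_{-k}E_P=\sum_i c_{i+k}\qbase{i}{x}{d}_q$) before applying proposition~\ref{alpha}, whereas you shift the basis elements and reindex later; your explicit reduction to the identity $(d-e-p+s)^2+(p-s)^2+2(d-p+s)(e+p-s)=d^2+e^2$ is a nice way to make transparent what the paper covers with ``after simplifications in the powers of $q$''.
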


\begin{proof}
  Let us first compute $\Psi(\sh_{-k} (E_P E_Q))$ using the expressions
  \eqref{EP_EQ_dans_base_H} and proposition \ref{alpha}. One gets
  \begin{equation*}
     \frac{(-1)^{d+e}}{[d+e+1]_q}\sum_{i,j} (-1)^{i+j}c_{i+k} c'_{j+k} \frac{q^{-\binom{d-i}{2}+(d-i)(e-j)-\binom{e-j}{2}} }{\qbinom{d+e}{d-i+j}_q}.
  \end{equation*}

  Let us now replace $q$ by $1/q$ in this expression. One gets
  \begin{equation*}
    \frac{(-1)^{d+e}}{[d+e+1]_{1/q}}\sum_{i,j} (-1)^{i+j}c_{i+k}(1/q) c'_{j+k}(1/q) \frac{q^{\binom{d-i}{2}-(d-i)(e-j)+\binom{e-j}{2}}}{\qbinom{d+e}{d-i+j}_{1/q}}.
  \end{equation*}
  Using \eqref{sym_coeff_gorenstein} once for $P$ and once for $Q$, this becomes
  \begin{equation*}
    \frac{(-1)^{D+E}q^{d+e}}{[d+e+1]_{q}}\sum_{i,j} (-1)^{i+j}c_{r-1+d-i-k} c'_{r-1+e-j-k} \frac{q^{\binom{d-i}{2}-(d-i)(e-j)+\binom{e-j}{2}-\binom{d+1}{2}+Z-\binom{e+1}{2}+Z'+(d-i+j)(e-j+i)} }{\qbinom{d+e}{d-i+j}_{q}}.
  \end{equation*}
  Changing the indices of summations $i \longleftrightarrow r-1+d-i-2k$ and $j \longleftrightarrow r-1+e-j-2k$, one gets (after simplifications in the powers of $q$)
  \begin{equation*}
    \frac{q^{-2k} q^{Z+Z'+r-1} (-1)^{D+E}}{[d+e+1]_{q}}\sum_{i,j} (-1)^{d-i+e-j}c_{i+k} c'_{j+k} \frac{q^{-\binom{d-i}{2}+(d-i)(e-j)-\binom{e-j}{2}} }{\qbinom{d+e}{e+i-j}_{q}}.    
  \end{equation*}
  Up to the power of $q$ in front of the sum, this is $(-1)^{D+E}$
  times the initial expression for $\Psi(\sh_{-k} (E_P E_Q))$.
\end{proof}

In the special case of reflexive (\textit{i.e.} $1$-Gorenstein)
polytopes, it is not necessary to consider a product of two polytopes
to obtain a similar result. Let us now assume that $P$ is reflexive.
\begin{theorem}
  \label{th_reflexif}
  The value at $1/q$ of the fraction
  $\Psi(E_P)$ is $(-1)^{D} q^{Z}$ times itself.
\end{theorem}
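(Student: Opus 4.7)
The plan is to mimic the bookkeeping argument of Theorem \ref{th_gorenstein}, but with a single polytope and $r=1$. The crucial simplification is that only a single evaluation of $\Psi$ on the $H$-basis (Proposition \ref{beta}) appears, so Proposition \ref{alpha} and the technical Lemma \ref{nut} are not needed.

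First I would write $E_P = \sum_{i=0}^{d} c_i\, \qbase{i}{x}{d}_q$ and apply Proposition \ref{beta} termwise to get
$$\Psi(E_P) \;=\; \frac{(-1)^{d}}{[d+1]_q}\sum_{i=0}^{d} (-1)^{i}\, c_i\, \frac{q^{-\binom{d-i}{2}}}{\qbinom{d}{i}_q}.$$
Next I would apply $q \mapsto 1/q$ to this expression. The two elementary identities $[d+1]_{1/q} = q^{-d}[d+1]_q$ and $\qbinom{d}{i}_{1/q} = q^{-i(d-i)}\qbinom{d}{i}_q$ from the notation section convert every $q$-symbol back to its standard form and pull out an explicit power of $q$ in front of each summand.

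The next step is to use the symmetry from Proposition \ref{sym_and_vanish} specialised to $r=1$, which reads $c_i(1/q) = (-1)^{D+d} q^{-\binom{d+1}{2}+Z} c_{d-i}$, to replace every $c_i(1/q)$ by a scalar multiple of $c_{d-i}$. Relabelling $j = d-i$ and using $\qbinom{d}{d-j}_q = \qbinom{d}{j}_q$, the resulting sum has exactly the shape of the original $\Psi(E_P)$, multiplied by an overall sign $(-1)^{D+d+j}=(-1)^{D+d-j}$ (which matches the $(-1)^{d-j}$ inside the sum up to the prefactor $(-1)^D$) and by a global power of $q$.

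The only remaining task is to check that the power of $q$ sitting in front collapses correctly. Writing out the exponent on the $j$-th term, one has to verify
$$-\tbinom{d+1}{2} + Z + d + \tbinom{j}{2} + j(d-j) + \tbinom{d-j}{2} \;=\; Z,$$
which is a short binomial identity independent of $j$. The main (and only) obstacle is the careful bookkeeping of these $q$-exponents; once verified, the identity shows that $\Psi(E_P)|_{q\to 1/q} = (-1)^{D} q^{Z}\, \Psi(E_P)$, as claimed.
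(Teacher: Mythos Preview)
Your proposal is correct and follows essentially the same route as the paper's own proof: write $\Psi(E_P)$ via Proposition~\ref{beta}, send $q\mapsto 1/q$, invoke the coefficient symmetry of Proposition~\ref{sym_and_vanish} with $r=1$, reindex $i\leftrightarrow d-i$, and verify the resulting $q$-exponent identity. The only cosmetic difference is the order in which you apply the $q$-binomial/$q$-integer inversion identities versus the coefficient symmetry, but the computation is the same.
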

\begin{proof}
  Let us first compute $\Psi(E_P)$ using the expression
  \eqref{EP_EQ_dans_base_H} for $P$ and proposition \ref{beta}. One gets
  \begin{equation*}
     \frac{(-1)^{d}}{[d+1]_q}\sum_{i} (-1)^{i}c_i \frac{q^{-\binom{d-i}{2}} }{\qbinom{d}{i}_q}.
  \end{equation*}  
  Let us now replace $q$ by $1/q$ in this expression. One gets
  \begin{equation*}
    \frac{(-1)^{d}}{[d+1]_{1/q}}\sum_{i} (-1)^{i}c_i(1/q) \frac{q^{\binom{d-i}{2}}}{\qbinom{d}{i}_{1/q}}.
  \end{equation*}
  Using \eqref{sym_coeff_gorenstein} for $P$ (and the hypothesis
  $r=1$), this becomes
  \begin{equation*}
    \frac{(-1)^{D}q^{d}}{[d+1]_{q}}\sum_{i} (-1)^{i}c_{d-i} \frac{q^{\binom{d-i}{2}-\binom{d+1}{2}+Z + i(d-i)} }{\qbinom{d}{i}_{q}}.
  \end{equation*} 
 Changing the index of summation $i \longleftrightarrow d-i$, one gets (after simplifications in the powers of $q$)
  \begin{equation*}
    \frac{q^{Z} (-1)^{D}}{[d+1]_{q}}\sum_{i} (-1)^{d-i}c_{i} \frac{q^{-\binom{d-i}{2}} }{\qbinom{d}{i}_{q}}.    
  \end{equation*}
  Up to the power $q^Z$ in front of the sum, this is $(-1)^{D}$
  times the initial expression for $\Psi(E_P)$.
\end{proof}

In fact, the self-reciprocal fractions involved in
theorem \ref{th_gorenstein} are all the same. Keeping the same
notations, one has the following result.

\begin{proposition}
  The fractions $q^{-k} \Psi(\sh_{-k} (E_P E_Q))$ for
  $k=0,1,\dots,r-1$ are all equal.
\end{proposition}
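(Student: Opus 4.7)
The plan is to prove the stronger statement that $\Psi(\sh_{-k-1}(E_P E_Q)) = q\,\Psi(\sh_{-k}(E_P E_Q))$ for each $0 \leq k \leq r-2$; telescoping then gives that $q^{-k} \Psi(\sh_{-k}(E_P E_Q))$ is independent of $k$, which is the claim. The central tool will be lemma \ref{psi_and_shift}.

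Set $F = E_P E_Q$ and apply lemma \ref{psi_and_shift} with $E = \sh_{-k-1} F$. Using the translation property $[n,[k,x]_q]_q = [n+k,x]_q$, one has
\[
E(1+qx) = F([-k-1,[1,x]_q]_q) = F([-k,x]_q) = (\sh_{-k} F)(x).
\]
A short chain-rule computation gives $(\sh_{-k-1} F)(0) = F([-k-1]_q)$ and $\partial_x (\sh_{-k-1} F)(0) = q^{-k-1} F'([-k-1]_q)$, so lemma \ref{psi_and_shift} specializes to the recursion
\[
q\,\Psi(\sh_{-k} F) - \Psi(\sh_{-k-1} F) = (q-1) F([-k-1]_q) + q^{-k-1} F'([-k-1]_q).
\]

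The remaining step is to observe that the right-hand side vanishes for $0 \leq k \leq r-2$. As used in the proof of proposition \ref{sym_and_vanish}, the $r$-Gorenstein hypothesis forces both $E_P$ and $E_Q$ to vanish at each of $[-1]_q, \ldots, [-(r-1)]_q$. Hence the product $F = E_P E_Q$ has a \emph{double} zero at each such point, so that
\[
F'([-k-1]_q) = E_P'([-k-1]_q) E_Q([-k-1]_q) + E_P([-k-1]_q) E_Q'([-k-1]_q) = 0
\]
as well. Both terms on the right-hand side of the recursion therefore disappear, yielding $\Psi(\sh_{-k-1} F) = q\,\Psi(\sh_{-k} F)$, as required.

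The only conceptual point is the double-zero observation, which is precisely what makes the result cleanest for products $E_P E_Q$ of two $r$-Gorenstein $q$-Ehrhart polynomials: a single factor would leave the $F'$ term alive. I do not anticipate any serious technical obstacle, since both lemma \ref{psi_and_shift} and the vanishing of $E_P, E_Q$ at $[-1]_q,\dots,[-(r-1)]_q$ are already in hand.
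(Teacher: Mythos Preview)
Your proof is correct and is essentially the paper's own argument: both apply lemma \ref{psi_and_shift} to the shifted product $\sh_{-k}(E_P E_Q)$, identify the resulting right-hand side with $E_P E_Q$ and its derivative evaluated at $[-k]_q$ (your index $k+1$ is the paper's $k$), and then kill that right-hand side using the vanishing of both $E_P$ and $E_Q$ at $[-1]_q,\dots,[-(r-1)]_q$. Your explicit ``double zero'' remark is exactly the mechanism the paper is using when it writes out the product rule and observes that each term contains a vanishing factor.
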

\begin{proof}
  Let us apply lemma \ref{psi_and_shift} to the polynomial $\sh_{-k}
  (E_P E_Q)$ for $k = 1, \dots, r-1$. One gets
  \begin{multline*}
    q \Psi(\sh_{1-k} (E_P E_Q)) - \Psi(\sh_{-k} (E_P E_Q)) \\= (q - 1)
    E_P([-k]_q)E_Q([-k]_q) + q^{-k} \partial_x E_P([-k]_q) E_Q([-k]_q) +
    q^{-k} E_P([-k]_q) \partial_x E_Q([-k]_q).
  \end{multline*}
  Because of the $r$-Gorenstein property, the right hand side
  vanishes for $k=1, \dots, r-1$. This implies the statement.
\end{proof}
\section{Classical case $q=1$}

\label{classic}

One can state purely classical corollaries of theorems
\ref{th_gorenstein} and \ref{th_reflexif} by letting $q=1$. The
$q$-Ehrhart polynomial becomes the Ehrhart polynomial, and does no
longer depend on the choice of a linear form $\lambda$.

In this context, $\Psi$ becomes the linear form on the space $\QQ[x]$
that maps $x^n$ to the Bernoulli number $B_n$. The operator $s_{-k}$
becomes the evaluation of polynomials in $x$ at $x-k$. One obtains the
following statements.

\begin{theorem}
  \label{zero_odd_3}
  Let $P$ be a product of at least two $r$-Gorenstein polytopes. Let
  $E_P$ be its Ehrhart polynomial. The numbers $\Psi(\sh_{-k} E_P)$
  for $k=0,1,\dots,r-1$ are all equal. If moreover the dimension of
  $P$ is odd, then they all vanish.
\end{theorem}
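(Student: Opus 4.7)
The plan is to deduce the theorem as a direct specialization to $q=1$ of the results already established for the $q$-Ehrhart polynomials. Both assertions follow without new computation once the product $P$ is grouped into two factors.

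First I would write $P = P_1 \times P_2$, where $P_1$ and $P_2$ are each products of (at least one) $r$-Gorenstein polytopes, which is possible since $P$ has at least two $r$-Gorenstein factors and a product of $r$-Gorenstein polytopes is $r$-Gorenstein (as noted after Proposition \ref{symmetry_of_qehr}). Then the $q$-Ehrhart polynomial of $P$ with respect to the direct sum of linear forms factors as $E_{P_1} E_{P_2}$, so that Theorem \ref{th_gorenstein} and the preceding proposition both apply.

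For the first assertion (all the values $\Psi(\sh_{-k} E_P)$ are equal for $0 \leq k \leq r-1$), I would invoke the proposition asserting that the fractions $q^{-k}\Psi(\sh_{-k}(E_{P_1} E_{P_2}))$ are all equal in $\QQ(q)$, and simply specialize to $q=1$. At $q=1$ the prefactor $q^{-k}$ becomes $1$, the $q$-Ehrhart polynomial becomes the classical Ehrhart polynomial of $P$, and the operator $\sh_{-k}$ becomes the shift $x \mapsto x-k$, yielding the claimed equality of the $r$ rational numbers.

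For the vanishing under the odd-dimension hypothesis, I would apply Theorem \ref{th_gorenstein} to the factorization $P_1 \times P_2$: the fraction $f(q) := q^{-k}\Psi(\sh_{-k}(E_{P_1} E_{P_2}))$ satisfies $f(1/q) = (-1)^{D_1+D_2} q^{Z+Z'+r-1}\, f(q)$ where $D_1 + D_2 = \dim P$. Setting $q=1$ kills every power of $q$ and leaves $f(1) = (-1)^{\dim P} f(1)$, so when $\dim P$ is odd this forces $f(1) = \Psi(\sh_{-k} E_P) = 0$. There is no real obstacle here; the only point to verify is that the specialization $q=1$ is legitimate, i.e.\ that the rational fraction $f(q)$ has no pole at $q=1$. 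This is clear since $E_{P_1}$ and $E_{P_2}$ belong to the subring $\ano_q \subset \QQ(q)[x]$ of polynomials with coefficients in $\ZZ[q,1/q]$, the shift $\sh_{-k}$ and the operator $\Sigma$ preserve this subring, and only the final evaluation $V$ introduces denominators that remain regular at $q=1$ (equivalently, one can read off regularity at $q=1$ from Proposition \ref{alpha}, whose denominators $[d+e+1]_q \qbinom{d+e}{d-i+j}_q$ do not vanish there).
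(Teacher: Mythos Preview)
Your proof is correct and follows exactly the route the paper intends: Theorem \ref{zero_odd_3} is stated in the paper as a direct corollary of Theorem \ref{th_gorenstein} and the subsequent proposition, obtained by letting $q=1$, and you have simply written out this specialization in detail (including the grouping $P=P_1\times P_2$, which the paper leaves implicit). One small remark: your claim that the $q$-Ehrhart polynomials lie in $\ano_q$ is not proved in this paper (it comes from \cite{chapoton_qehrhart}), but as you note, regularity at $q=1$ can equally well be read off from the explicit formula in Proposition \ref{alpha} once one knows the coefficients $c_i,c'_j$ themselves are regular at $q=1$, which follows immediately from Lagrange interpolation of the $q$-Ehrhart polynomial at the distinct points $[0]_q,\dots,[d]_q$.
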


\begin{theorem}
  \label{zero_refl}
  Let $P$ be a reflexive polytope. Let $E_P$ be its Ehrhart
  polynomial. If the dimension of $P$ is odd, then $\Psi(E_P)=0$.
\end{theorem}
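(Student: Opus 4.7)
The plan is to obtain Theorem \ref{zero_refl} by specializing Theorem \ref{th_reflexif} at $q = 1$. By Theorem \ref{th_reflexif}, one has the identity
\begin{equation*}
  \Psi(E_P)\bigl|_{q \to 1/q} = (-1)^{D} q^{Z} \, \Psi(E_P)
\end{equation*}
in $\QQ(q)$. The goal is to evaluate both sides at $q = 1$: the right-hand side becomes $(-1)^{D} \Psi(E_P)|_{q=1}$, and if $D$ is odd this forces $\Psi(E_P)|_{q=1} = 0$. It remains to identify the classical quantity $\Psi(E_P)|_{q=1}$ with the expression $\Psi(E_P)$ appearing in the statement of the theorem.

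For the identification, one uses that the Carlitz $q$-Bernoulli numbers $B_{q,n}$ defined in \eqref{defiB} are rational functions in $q$ whose only poles are simple poles at non-trivial roots of unity. In particular, they are regular at $q = 1$, and their values there are the classical Bernoulli numbers $B_n$, as already recalled in the paper. Since $\Psi$ is linear and $E_P \in \ano_q$ expands as a $\ZZ[q,1/q]$-combination of the basis $\qbase{j}{x}{d}_q$, applying $\Psi$ produces a rational function in $q$ that is regular at $q=1$. Moreover, the $q$-Ehrhart polynomial $E_{P,\lambda}(x,q)$ specializes at $q=1$ to the classical Ehrhart polynomial $E_P(x)$, which is independent of the auxiliary linear form $\lambda$, and $\Psi$ specializes to the linear form on $\QQ[x]$ sending $x^n$ to $B_n$. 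Thus $\Psi(E_P)|_{q=1}$ is exactly the quantity $\Psi(E_P)$ of the theorem.

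Putting these pieces together, set $q=1$ in the functional equation of Theorem \ref{th_reflexif}. The factor $q^Z$ becomes $1$, and the left-hand side coincides with the right-hand side up to the sign $(-1)^D$. For $D$ odd this gives $\Psi(E_P) = -\Psi(E_P)$, hence $\Psi(E_P) = 0$, as claimed.

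The main (and only) subtlety I anticipate is the regularity of $\Psi(E_P)$ at $q=1$, which is needed to justify the specialization; this is handled by the above remark on the poles of the Carlitz $q$-Bernoulli numbers and the fact that $E_P$ lies in $\ano_q$. Everything else is a direct substitution in the functional equation already proved.
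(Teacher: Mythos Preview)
Your proposal is correct and follows exactly the route the paper takes: the paper presents Theorem~\ref{zero_refl} as an immediate corollary of Theorem~\ref{th_reflexif} by letting $q=1$, without writing out a separate proof. You have simply made explicit the specialization argument and added a discussion of regularity at $q=1$ that the paper leaves implicit. One minor remark: the claim that $E_P\in\ano_q$ is not stated in this paper (it is in the cited reference on $q$-Ehrhart polynomials), but your conclusion follows equally well from the facts already recorded here, namely that the $q$-Ehrhart polynomial specializes at $q=1$ to the classical Ehrhart polynomial and that the $B_{q,n}$ are regular at $q=1$ with value $B_n$.
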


Let us now consider some simple examples.

Let $P$ be the polytope with vertices $0$ and $1$ in $\ZZ$. This is a
$2$-Gorenstein polytope, with Ehrhart polynomial $x+1$. One deduces
from theorem \ref{zero_odd_3} that $\Psi((x+1)^n) = 0$ for all odd $n
\geq 3$. By definition of the Bernoulli numbers, the expression
$\Psi((x+1)^n)$ is just $B_n$ itself, which is well-known to vanish in
this case.

Let now $P$ be the polytope with vertices $0$ and $2$ in $\ZZ$. This
is a reflexive polytope (up to translation), with Ehrhart polynomial $1 + 2 x$. Therefore theorem \ref{zero_refl} implies that $\Psi(1+2x)=0$, which is indeed
the case because $B_0=1$ and $B_{1}=-1/2$.

Let us consider a more complicated example. There exists a reflexive
simplex in dimension $5$ with $355785$ lattice points \cite{nill}. Its
Ehrhart polynomial is $E = 271803 x^5/5 + 271803 x^4/2 + 118594 x^3 +
83979 x^2/2 + 24692 x/5 + 1$. One can check directly that its image by
$\Psi$ vanishes, as well as the images by $\Psi$ of its small odd
powers. By contrast, the even values do not vanish, for example
$\Psi(E^2) = -48827203879/165$.

As an interesting counter-example, consider the triangle in $\ZZ^2$
with vertices $(0,0)$, $(1,0)$ and $(1,1)$. The Ehrhart polynomial is
$E=\binom{x+2}{2}$. The first few values of $\Psi(E^i)$ are given by
\begin{equation}
  \label{rama}
  1, 1/3, 1/30, -1/105, 1/210, -1/231, 191/30030, -29/2145, 2833/72930, \dots
\end{equation}
There is no vanishing here, as this polytope is $3$-Gorenstein, but
not a product of two such polytopes. One can note that these
coefficients have appeared in the work of Ramanujan, in an asymptotic
formula involving triangular numbers (see number (9) of \cite[Chapter
38]{berndtV}).

\subsection{Bernoulli-like numbers attached to Gorenstein polytopes}

Let $P$ be an $r$-Gorenstein polytope of odd dimension $D$ and let
$E_P$ be its Ehrhart polynomial. As a special case of theorem
\ref{zero_odd_3}, the rationals numbers $\Psi(E_P^k)$ attached to the
powers $E_P^k$ vanish for every odd integer $k \geq 3$. This can be
seen as an analog of the same statement for Bernoulli
numbers.

This suggest, for any fixed $r$-Gorenstein polytope $P$, to think
about the sequence $\Psi(E_p^k)_{k \geq 0}$ as some kind of
Bernoulli-like numbers attached to the Gorenstein polytope $P$.

It seems that at least one other property of Bernoulli numbers
extend to the Bernoulli-like numbers, namely the following alternating
sign property, which is is well-known for the Bernoulli numbers.
\begin{conjecture}
  If the dimension of $P$ is odd, the signs of the non-zero
  $\Psi(E_{P}^k)$ alternate.
\end{conjecture}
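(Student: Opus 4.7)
The natural strategy is to imitate the classical proof that the non-zero Bernoulli numbers have alternating sign, which rests on the positivity of $\zeta(s)$ for real $s>1$ combined with the functional equation of $\zeta$. Accordingly my plan is to realize $\Psi(E_{P}^{k})$ as values at negative integers of the Dirichlet-type series attached to $E_{P}$ in section \ref{meromorph}, and then try to import a functional-equation-driven sign analysis.

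Concretely, I would work with $Z_{P}(s)=\sum_{n\geq 1}E_{P}(n)^{-s}$, which converges absolutely for $\Re(s)>1/D$ and is manifestly positive on a half-line of the real axis. The main result of section \ref{meromorph} then identifies $Z_{P}(-k)$ with an explicit non-zero rational multiple of $\Psi(E_{P}^{k})$, so the conjecture becomes a sign statement about $Z_{P}$ at negative integers. The next step would be to locate a functional equation of the shape $Z_{P}(s)=\Gamma\text{-factor}\cdot \sin(\pi s/2)\cdot Z_{P}^{\vee}(1-s)$ with some dual object $Z_{P}^{\vee}$ that is positive on the positive real axis; combined with the first positivity, the sine factor would then force the alternating sign on the non-zero values $Z_{P}(-k)$.

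The principal obstacle is that no such functional equation is established in the paper, and for a general Gorenstein polytope the existence of a clean $s\mapsto 1-s$ symmetry is genuinely unclear. A fallback route would be to attack $\Psi(E_{P}^{k})$ via an Euler--Maclaurin/umbral representation: because $\Psi(f)=V(\Sigma f)$ amounts at $q=1$ to differentiating $\sum_{j=0}^{x}E_{P}(j)^{k}$ at $x=-1$, one can unfold this into an umbral expression $E_{P}(B)^{k}$, with $B$ the Bernoulli umbra, and try to control its sign by expanding in the basis of proposition \ref{h_base}. The symmetry from proposition \ref{sym_and_vanish}, together with non-negativity and unimodality properties of the $h^{*}$-vector of a Gorenstein polytope, should impose strong constraints on the coefficients that appear, though converting such positivity into a sign on a signed sum is notoriously delicate.

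Short of a general proof, a reasonable intermediate target is to verify the conjecture for reflexive simplices (where the $h^{*}$-vector is symmetric and explicit, as in the dimension-$5$ example of section \ref{classic}) and for pyramids over known cases. Success there would isolate the combinatorial mechanism behind the sign change and could point either toward a functional equation for $Z_{P}$ or toward a direct inductive argument based on the pyramid construction, which acts on $E_{P}$ by the operator $\Sigma$ and should act predictably on the associated Dirichlet series.
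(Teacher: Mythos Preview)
The statement you are attempting to prove is explicitly presented in the paper as a \textbf{Conjecture}, not a theorem; the paper offers no proof of it, only a motivating analogy with the alternating signs of the Bernoulli numbers and a remark linking it to zeros of $q$-analogues and to the Dirichlet series of section~\ref{meromorph}. There is therefore no ``paper's proof'' to compare against.

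Your proposal is not a proof either, and you correctly identify the essential gap: the sign-alternation argument for the classical Bernoulli numbers relies on the functional equation of $\zeta$, and no analogue of such a functional equation for $Z(E_P;s)$ is established anywhere in the paper (nor, to my knowledge, elsewhere). Your Euler--Maclaurin/umbral fallback is likewise only heuristic, and you are right that converting $h^{*}$-positivity or unimodality into a sign on an alternating sum is delicate; nothing in the paper supplies the missing ingredient. One technical point to correct: the Dirichlet series in section~\ref{meromorph} is $Z(E;s)=\sum_{n\geq 0} E'(n)/E(n)^{s}$, with the derivative in the numerator, not $\sum E_P(n)^{-s}$ as you wrote, and it is for \emph{that} series that the paper proves $Z(E;1-k)=-\Psi(E^{k})/k$; the bare series $\sum E_P(n)^{-s}$ has a more complicated relation to $\Psi$. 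In short, your outline is a reasonable research plan, but it remains a plan rather than a proof, and the conjecture is genuinely open in the paper.
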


For example, consider the $3$-dimensional simplex with vertices
$(0,0,0),(1,0,0)$, $(1,1,0)$ and $(1,1,1)$. Its Ehrhart polynomial is
$E=\binom{x+3}{3}$. The first few values of $\Psi(E^k)$ are
\begin{equation*}
1,
 1/4,
 1/140,
 0,
 -41/60060,
 0,
 50497/19399380,
 0,
 -13687983/148728580,
 0,
 485057494433/30855460020, \dots
\end{equation*}

In the case of even dimension, it seems also that the signs are alternating, see for example \eqref{rama}.

These alternating conjectures are related to the behaviour of the
number of zeroes in $[0,1]$ of the $q$-analogues of these numbers and
to the next topic, namely continuous interpolation of the
Bernoulli-like number by zeta-like functions.

\subsection{Zeta functions of polynomials}

Given a polynomial $E$ in $\QQ[x]$ taking positive values on $\NN$,
one can consider a kind of zeta function attached to $E$, defined by
\begin{equation}
  \label{my_def_zeta}
  Z(E ; s) = \sum_{n \geq 0} \frac{\partial_x E(n)}{E(n)^s},
\end{equation}
for complex numbers $s$ with $\mathfrak{R}(s)>1$.

One will be mostly interested in the case where $E$ is the Ehrhart
polynomial of a lattice polytope $P$. For example, one gets in this way
\begin{equation}
  \sum_{n \geq 0} \frac{1}{(1+n)^s} = \zeta(s) \quad \text{and}\quad
  \sum_{n \geq 0} \frac{2}{(1+2 n)^s} = 2 (1-2^{-s}) \zeta(s)  
\end{equation}
for the two Gorenstein polytopes of dimension $1$.

One shows in the next section that under some mild hypotheses on $E$
the function $Z(E; s)$ is a meromorphic function of the complex
parameter $s$ with only a single pole at $1$ with residue
$1$. Moreover its values at negative integers are given in terms of
$\Psi$ and $E$ by the formula
\begin{equation}
  \label{negative_value}
  Z(E; 1-k) = -\frac{\Psi(E^k)}{k}
\end{equation}
for all $k \in \NN^*$.

Before proving this in the next section, let us give an heuristic
argument. By letting $q=1$ in lemma \ref{psi_and_shift}, one
gets
\begin{equation*}
   \Psi(E(1+x)) - \Psi(E) =  \partial_x E(0).
\end{equation*}
After a telescoping summation, one gets
\begin{equation*}
  \Psi(E(\ell+x)) - \Psi(E) =  \sum_{j=0}^{\ell-1} \partial_x E(j)
\end{equation*}
For polynomials that are powers (of the shape $F^k$ for some $F$), one
therefore gets
\begin{equation}
   \Psi(F(\ell+x)^k) - \Psi(F^k) =  k \sum_{j=0}^{\ell-1} \partial_x F(j) F(j)^{k-1}.
\end{equation}
Formally going to the limit $\ell=\infty$ (and assuming that the first
term of the left-hand side disappears) gives formula \eqref{negative_value}.

\section{Study of zeta-like functions}

\label{meromorph}

Let $(B_k)_{k\geq 0}$ be the sequence of Bernoulli numbers. Recall the linear operator $\Psi : \CC[X] \rightarrow \CC$ defined by
\begin{equation*}
 \Psi (X^k) = B_k \quad \forall k \in \NN.
\end{equation*}
By the usual properties of Bernoulli numbers, there also holds
\begin{equation}
  \label{psi_shifted}
  \Psi ((X+1)^k) = (-1)^k~B_k \quad \forall k \in \NN.
\end{equation}

For all $\x=(x_1,\dots, x_d) \in \RR^d$ and all $\alphab =(\alpha_1,\dots, \alpha_d) \in \NN^d$, we will use in the sequel the following notations:
$ \x^{\alphab} =x_1^{\alpha_1}\dots x_d^{\alpha_d}$ and $ |\alphab|=\alpha_1+\dots +\alpha_d$. One denotes also for any $z\in \CC$ verifying $\Re z >0$ and any $s\in \CC$, $z^s=e^{s \log x}$ where $\log$ is the principal determination of the logarithm.\par

The purpose of this section is to prove the following result:
\begin{theorem}\label{prolong}
 Let $E \in \RR[X]$ be a polynomial of degree $d\geq 1$. 
 Let $a_1,\dots, a_d\in \CC$ be the roots (not necessarily distinct) of $E$.
 Let $A\in \NN^*$ such $\forall x\geq A \enskip \Re E(x) > 0$ .
 One considers the Dirichlet series 
 \begin{equation*}Z_A(E; s):= \sum_{n=A}^{+\infty} \frac{E'(n)}{E(n)^s}.
\end{equation*}
 Then:
 \begin{enumerate} 
 \item $s\mapsto Z_A(E; s)$ converges absolutely in the half-plane $\{ \Re(s) >1\}$ and has a meromorphic continuation to the whole complex plane $\CC$;
 \item the meromorphic continuation of $Z_A(E; s)$ has only one simple pole in $s=1$ with residue $1$.
 \item for any $M\in \NN^*$, $ \quad Z_A(E; 1-M)= -\frac{1}{M} \Psi (E(X+1)^M) -\sum_{n=1}^{A-1} E(n)^{M-1} E'(n)$.
\end{enumerate}
\end{theorem}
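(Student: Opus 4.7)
My plan is to apply the Euler--Maclaurin summation formula directly to the defining series of $Z_A(E; s)$, then recognize that the Bernoulli numbers appearing as Euler--Maclaurin coefficients reassemble at $s = 1 - M$ into precisely the linear form $\Psi$ evaluated on the shifted $M$-th power of $E$.

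First, for part (1), the asymptotic $E(n) \sim c\, n^d$ with $E'(n) = O(n^{d-1})$ gives $E'(n)/E(n)^s = O(n^{-1 - d(\Re s - 1)})$, so the series converges absolutely for $\Re s > 1$. To establish the meromorphic continuation, set $f(x, s) = E'(x)/E(x)^s$ and apply the Euler--Maclaurin formula at order $K \geq 1$:
\begin{equation*}
Z_A(E; s) = \int_A^\infty f(x, s)\,dx + \frac{f(A, s)}{2} - \sum_{k=1}^{K} \frac{B_{2k}}{(2k)!}\, \partial_x^{2k-1} f(x, s)\big|_{x=A} + R_K(s),
\end{equation*}
where $R_K(s)$ is the standard remainder involving the periodic Bernoulli function and $\partial_x^{2K} f$. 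The integral equals $E(A)^{1-s}/(s-1)$ (by the substitution $u = E(x)$), and the bound $\partial_x^{2K} f(x, s) = O(x^{d-1-d\Re s-2K})$ shows that $R_K(s)$ is holomorphic on the half-plane $\Re s > 1 - 2K/d$. Letting $K \to \infty$ produces meromorphic continuation to $\CC$, and the only singularity is the simple pole at $s = 1$ coming from $E(A)^{1-s}/(s-1)$, with residue $E(A)^0 = 1$; this proves (1) and (2).

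For part (3), fix $M \geq 1$ and set $F = E^M$, a polynomial of degree $dM$. At $s = 1-M$ the summand becomes $f(x, 1-M) = E'(x) E(x)^{M-1} = F'(x)/M$, itself a polynomial. Choosing $K$ with $2K > dM$ forces $\partial_x^{2K} f(x, 1-M) \equiv 0$, so $R_K(1-M) = 0$ and only finitely many EM corrections survive; direct substitution gives
\begin{equation*}
Z_A(E; 1-M) = -\frac{F(A)}{M} + \frac{F'(A)}{2M} - \frac{1}{M}\sum_{k \geq 1} \frac{B_{2k}}{(2k)!} F^{(2k)}(A) = -\frac{1}{M} \sum_{k \geq 0} \frac{B_k}{k!} F^{(k)}(A),
\end{equation*}
using $B_0 = 1$, $B_1 = -1/2$, and $B_{2k+1} = 0$ for $k \geq 1$. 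By Taylor's formula $F(X + A) = \sum_k F^{(k)}(A) X^k / k!$ combined with $\Psi(X^k) = B_k$, this equals $-\Psi(E(X + A)^M)/M$. Finally, to shift from $X + A$ to $X + 1$, iterate the $q = 1$ specialization of lemma \ref{psi_and_shift}, namely $\Psi(G(X + 1)) - \Psi(G) = G'(0)$, applied to $G_j(X) = E(X + 1 + j)^M$ for $j = 0, \dots, A - 2$; telescoping yields
\begin{equation*}
\Psi(E(X + A)^M) - \Psi(E(X + 1)^M) = M \sum_{n = 1}^{A - 1} E(n)^{M-1} E'(n),
\end{equation*}
and substitution gives the stated formula.

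The main technical obstacle is the uniform control of $R_K(s)$ justifying meromorphicity in expanding half-planes. The conceptual heart of the argument is that the coefficients of Euler--Maclaurin, namely $B_0$ (in the integral), $B_1$ (in the midpoint term $f(A)/2$), and the higher $B_{2k}$ (in the corrections), together reconstitute the full Taylor expansion defining $\Psi(F(X + A))$, the odd Bernoulli numbers $B_{2k+1}$ for $k \geq 1$ being zero on both sides.
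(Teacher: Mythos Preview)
Your argument is correct and takes a genuinely different route from the paper's. The paper factors $E(X)=\prod_j(X-a_j)$, expands each $(1-a_j/n)^{-s}$ (and the corresponding factors coming from $E'/E$) by a Taylor formula with integral remainder, and thereby reduces $Z_A(E;s)$ to a finite linear combination of shifted Riemann zeta values $\zeta_A(ds-(d-1)+\ell)$ plus a remainder holomorphic on a half-plane; the values at $s=1-M$ then come from the classical $\zeta(1-k)=-B_k/k$ together with an explicit computation that reconstitutes $E(X+1)^M$ from the root data. Your approach bypasses the factorization entirely: applying Euler--Maclaurin to $f(x,s)=E'(x)E(x)^{-s}$ exploits the exact antiderivative $E(x)^{1-s}/(1-s)$ to isolate the pole, and the fact that $f(x,1-M)=F'(x)/M$ is a polynomial makes the remainder vanish identically, so the Bernoulli numbers appear directly as Euler--Maclaurin coefficients and assemble into $\Psi(F(X+A))$ without passing through $\zeta$. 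The final telescoping via the $q=1$ case of Lemma~\ref{psi_and_shift} is a clean way to shift from $A$ to $1$; the paper instead carries the correction $\sum_{n=1}^{A-1}E(n)^{M-1}E'(n)$ through the computation from the start via $\zeta_A=\zeta-\sum_{n<A}n^{-s}$. Your method is more self-contained and makes the link between $\Psi$ and the summation formula transparent; the paper's method is closer to standard Dirichlet-series machinery and would adapt more readily to variants where no closed-form antiderivative is available. The one place you should tighten is the remainder: state explicitly that $\partial_x^{2K}f(x,s)$ is, uniformly for $s$ in a compact set, $O(x^{d-1-d\Re s-2K})$ (the implied constant is polynomial in $s$), so that $R_K(s)$ is holomorphic on $\Re s>1-2K/d$ by dominated convergence; everything else is in order.
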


\begin{remark}
  By taking $A=1$ and using the shifted polynomial $E(X-1)$ in the
  previous theorem, point 3 gives the formula
  \eqref{negative_value}. Note that summation in $Z_1(E(X-1);s)$ starts at
  $1$, whereas summation in \eqref{my_def_zeta} starts at $0$.
\end{remark}

\begin{remark}
Point 1 of the theorem \ref{prolong} is classic, even in a very general framework (see for example \cite{mahler} or \cite{essouabri}). Our method in this paper is simple and provides, in addition to point 1, the new points 2 and 3 above. In \cite{decrisenoy} an analogue of point 3  was obtained for twisted Dirichlet series. However, the method of \cite{decrisenoy} uses the 
{\bf holomorphy} of twisted Dirichlet series in the whole space and therefore can not be used in our setting here.
\end{remark}

One needs the following elementary lemma:
\begin{lemma}\label{taylorplus}
 Let $d\in \NN^*$ and $\a=(a_1,\dots, a_d)\in \NN^d\setminus \{(0,\dots, 0)\}$. Set $\delta =\left(2 \max_j |a_j|\right)^{-1} >0$. Then, for any $N\in \NN$, any $\s=(s_1,\dots, s_d) \in \CC^d$ and any $x \in [-\delta, \delta]$, we have 
 \begin{equation}\label{taylorrelation}
   \prod_{j=1}^d (1 - x a_j)^{-s_j} = \sum_{\ell=0}^N c_{\ell} (\s)~x^{\ell} + x^{N+1} \rho_N (x; \s)
 \end{equation}
 where
 \begin{equation*}
   c_{\ell}(\s) = (-1)^{\ell} \sum_{\alphab \in \NN^d \atop |\alphab| =\ell } \a^{\alphab} \prod_{j=1}^d \binom{-s_j}{\alpha_j} 
 \end{equation*}
  and
 \begin{equation*}
 \rho_N (x; \s) = (-1)^{N+1} (N+1) \sum_{\alphab \in \NN^d \atop |\alphab| = N+1} \a^{\alphab} \prod_{j=1}^d \binom{-s_j}{\alpha_j} 
 \int_0^1 (1-t)^N \prod_{j=1}^d (1- t x a_j)^{-s_j-\alpha_j} ~dt.
 \end{equation*}
 Moreover we have:
 \begin{enumerate} 
 \item for any $x \in [-\delta, \delta]$, $\s\mapsto \rho_N (\s; x) $ is holomorphic in the whole space $\CC^d$;
 \item for any compact subset $K$ of $\CC^d$, there exists a constant $C=C(K,\a,N,d) >0$ such that 
 \begin{equation*}
   \forall (\s,x) \in K\times [-\delta,\delta] \qquad |\rho_{N} (\s; x)|\leq C. \end{equation*}
 \end{enumerate}
\end{lemma}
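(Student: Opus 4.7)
The plan is to apply Taylor's formula with integral remainder to the function $f_\s(x) := \prod_{j=1}^d (1 - x a_j)^{-s_j}$. Since $|x a_j| \leq \delta |a_j| \leq 1/2$ for $x \in [-\delta, \delta]$, each factor $1 - x a_j$ lies in $[1/2, 3/2] \subset \RR_+^*$, so $f_\s$ is well-defined via the principal determination of the logarithm, is $C^\infty$ in $x$, and is entire in $\s \in \CC^d$.

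The key computation is the iterated derivative. Starting from
\begin{equation*}
  \frac{d^k}{dx^k}(1 - x a_j)^{-s_j} = (-1)^k k!\, \binom{-s_j}{k} a_j^k (1 - x a_j)^{-s_j - k},
\end{equation*}
the generalized Leibniz (multinomial) rule applied to a product of $d$ factors yields
\begin{equation*}
  f_\s^{(M)}(x) = (-1)^M M! \sum_{|\alphab| = M} \a^\alphab \prod_{j=1}^d \binom{-s_j}{\alpha_j} (1 - x a_j)^{-s_j - \alpha_j}.
\end{equation*}
Evaluating at $x = 0$ and dividing by $\ell!$ identifies $f_\s^{(\ell)}(0)/\ell!$ with $c_\ell(\s)$. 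Substituting the $M = N+1$ case into the integral remainder $\frac{x^{N+1}}{N!} \int_0^1 (1-t)^N f_\s^{(N+1)}(tx)\, dt$, the factorials collapse via $(N+1)!/N! = N+1$, producing exactly the stated expression for $x^{N+1} \rho_N(x; \s)$. All sums are finite (indexed by $|\alphab| = N+1$), so no convergence issue arises.

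For the two remaining assertions on $\rho_N$: each $\binom{-s_j}{\alpha_j}$ is a polynomial in $s_j$, hence entire, and since $1 - t x a_j \in [1/2, 3/2]$ for $(t, x) \in [0,1] \times [-\delta, \delta]$, the factor $(1 - t x a_j)^{-s_j - \alpha_j} = \exp\bigl(-(s_j + \alpha_j) \log(1 - t x a_j)\bigr)$ is entire in $\s$. The integrand is continuous in $t$ and entire in $\s$, so the standard theorem on holomorphy of parameter integrals (Morera plus dominated convergence, using the bound below) gives that $\s \mapsto \rho_N(x; \s)$ is holomorphic on $\CC^d$. For the uniform bound on a compact $K$, each polynomial $|\binom{-s_j}{\alpha_j}|$ is bounded on $K$, and $|(1 - t x a_j)^{-s_j - \alpha_j}| = (1 - t x a_j)^{-\Re s_j - \alpha_j} \leq 2^{|\Re s_j| + \alpha_j}$ uniformly in $(t, x)$; integrating $(1-t)^N$ over $[0,1]$ and summing over the finite set $|\alphab| = N+1$ then yields the required constant depending only on $K, \a, N, d$.

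No step is truly delicate; the only bookkeeping obstacle is correctly tracking the sign $(-1)^{N+1}$ and the factor $N+1 = (N+1)!/N!$ that emerge from the derivative computation, so that the remainder coming out of Taylor's formula matches the stated form of $\rho_N$ on the nose.
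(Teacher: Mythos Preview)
Your proof is correct and follows essentially the same approach as the paper: compute the higher derivatives of $\phi(x)=\prod_j (1-xa_j)^{-s_j}$ (the paper says ``by induction on $\ell$'' where you invoke the multinomial Leibniz rule, yielding the same closed form), then apply Taylor's formula with integral remainder at $x=0$, and finally appeal to holomorphy under the integral sign for the analytic properties of $\rho_N$. Your write-up is in fact slightly more explicit than the paper's on the uniform bound for part~2, but the underlying argument is identical.
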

\begin{proof}
  Let us fix $\s \in \CC^d$.  One considers the function $\phi$ defined in $
  [-\delta, \delta]$ by $\phi(x) = \prod_{j=1}^d (1 - x
  a_j)^{-s_j}$. The function $\phi$ is infinitely differentiable in $
  [-\delta, \delta]$ and an induction on $\ell$ shows that for all
  $\ell\in \NN$ and all $x \in [-\delta, \delta]$,
 \begin{equation*}
   \frac{\phi^{(\ell)}(x)}{\ell!} =  (-1)^{\ell} \sum_{\alphab \in \NN^d \atop|\alphab| =\ell} \a^{\alphab} \prod_{j=1}^d \binom{-s_j}{\alpha_j} \prod_{j=1}^d (1- x a_j)^{-s_j-\alpha_j}.
 \end{equation*}
 The identity \eqref{taylorrelation} then follows from the application
 of the Taylor formula with integral remainder at $x=0$.

 The second part of the lemma follows from the theorem of holomorphy
 under the integral sign. This completes the proof of Lemma
 \ref{taylorplus}.
\end{proof}

\begin{proof}
  {\bf (points 1 and 2 of Theorem \ref{prolong})}\\
  For short, let us write $Z(s)$ for $Z_A(E;s)$.

  First we remark that if $\a=(a_1,\dots, a_d)=(0,\dots, 0)$, then $E$
  is of the form $E(X) = u X^d$ where $u>0$.
  It follows that $Z(s)=d u^{1-s} \zeta (d s-d+1)$ and Theorem \ref{prolong} is true in this case.

  One will assume in the sequel that $\a \neq (0,\dots,0)$ and set $\delta =\left(2 \max_j |a_j|\right)^{-1} >0$.
  One will note in the sequel $s=\sigma+ i\tau$ where $\sigma =\Re (s)$ and $\tau =\Im (s)$.
  It is easy to see that
\begin{equation*}
  \left|\frac{E'(n)}{E(n)^s}\right| \ll \frac{1}{n^{d\sigma -(d-1)}}.
\end{equation*}
It follows that $s\mapsto Z(s)$ converges absolutely in the half-plane $\{ \Re(s) >1\}$.\\

As the act of removing or adding a finite number of terms does not
change the meromorphy or poles, we can choose the integer $ A $ as
large as possible.
Let us choose here $A \in \NN^*$ such that $A \geq 2 \sup_{1\leq j \leq d} |a_j| = \delta^{-1}$.\\
It is clear that we can also assume without loss of generality that
the polynomial $E$ is unitary. It follows that
\begin{equation*}E(X)= \prod_{j=1}^d (x-a_j) \quad {\mbox { and }} \quad E'(X) = E(X) \left( \sum_{j=1}^d \frac{1}{X-a_j}\right).\end{equation*}
One deduces that for all $s \in \CC$ satisfying $\sigma = \Re (s)> 1$ there holds:
\begin{eqnarray*}
Z(s)&=& \sum_{n=A}^{+\infty} \frac{E'(n)}{E(n)^s}
= \sum_{j=1}^d \sum_{n=A}^{+\infty} \frac{1}{(n-a_j)^s \prod_{k\neq j} (n - a_k)^{s-1}}\\
&=& \sum_{j=1}^d \sum_{n=A}^{+\infty} 
\frac{1}{n^{ds -(d-1)}} \left(1-\frac{a_j}{n} \right)^{-s} \prod_{k\neq j} \left(1-\frac{a_k}{n} \right)^{-s+1}.
\end{eqnarray*}
Let $N\in \NN$. Lemma \ref{taylorplus} and the previous relation imply that for all $s\in \CC$ verifying $\sigma =\Re(s) >1$ we have:
\begin{equation*}
Z(s)= \sum_{j=1}^d \sum_{n=A}^{+\infty} 
\frac{1}{n^{ds -(d-1) }} \left[\sum_{\ell=0}^N c_{\ell}\left(f_j(s)\right) \frac{1}{n^{\ell}} + \frac{1}{n^{N+1}} \rho_N \left(x, f_j(s)\right)\right],
\end{equation*} 
where $f_j(s) =(s_1,\dots,s_d)$ with $s_k =s-1$ if $k\neq j$ and $s_j=s$.\\

One deduces that for all $s\in \CC$ verifying $\sigma =\Re(s) >1$ there holds:
\begin{eqnarray}\label{bon1}
Z(s)&=& \sum_{\ell=0}^N \left[\sum_{j=1}^dc_{\ell}\left(f_j(s)\right)\right] \zeta_A\left(d s -(d-1)+\ell\right) \nonumber \\
& & + \sum_{n=A}^{+\infty} 
\frac{1}{n^{ds -(d-1)+N+1}} \left[\sum_{j=1}^d \rho_N \left(x; f_j(s)\right)\right],
\end{eqnarray}
where $\zeta_A(s):= \sum_{n=A}^{+\infty} \frac{1}{n^s} =\zeta (s) - \sum_{n=1}^{A-1} \frac{1}{n^s}$.\\

On the other hand it is easy to see that for all $\ell\in \NN$:
\begin{eqnarray}\label{bon2}
\sum_{j=1}^d c_{\ell}\left(f_j(s)\right) &=& \sum_{j=1}^d (-1)^{\ell} \sum_{\alphab \in \NN^d \atop |\alphab| =\ell } \a^{\alphab} \binom{-s}{\alpha_j} \prod_{k\neq j} \binom{-s+1}{\alpha_k} \nonumber \\
&=& \sum_{j=1}^d (-1)^{\ell} \sum_{\alphab \in \NN^d \atop |\alphab| =\ell } \a^{\alphab} \frac{s+\alpha_j-1}{s-1} \prod_{k=1}^d \binom{-s+1}{\alpha_k} \nonumber \\
&=& (-1)^{\ell} \sum_{\alphab \in \NN^d \atop |\alphab| =\ell } \a^{\alphab} \prod_{k=1}^d \binom{-s+1}{\alpha_k} \sum_{j=1}^d \frac{s+\alpha_j-1}{s-1} \nonumber \\
&=& (-1)^{\ell} \sum_{\alphab \in \NN^d \atop |\alphab| =\ell } \a^{\alphab} \prod_{k=1}^d \binom{-s+1}{\alpha_k} \frac{d s-d + \ell}{s-1}.
\end{eqnarray}
Relations \eqref{bon1} and \eqref{bon2} imply that for all $s\in \CC$ satisfying $\sigma =\Re(s) >1$ we have:
\begin{eqnarray}\label{bon3}
(s-1) Z(s)&=& \sum_{\ell=0}^N \left[ (-1)^{\ell} \sum_{\alphab \in \NN^d \atop |\alphab| =\ell } \a^{\alphab} \prod_{k=1}^d \binom{-s+1}{\alpha_k}\right] \left(d s-d + \ell\right) 
\zeta_A\left(d s -(d-1)+\ell\right) \nonumber \\
& & + (s-1) \sum_{n=A}^{+\infty} 
\frac{1}{n^{ds -(d-1)+N+1}} \left[\sum_{j=1}^d \rho_N \left(x; f_j(s)\right)\right].
\end{eqnarray}
Moreover,
\begin{enumerate}
\item the point 2 of lemma \ref{taylorplus} and the the dominated convergence theorem of Lebesgue imply that 
\begin{equation*}s\mapsto \sum_{n=A}^{+\infty} 
\frac{1}{n^{ds -(d-1)+N+1}} \left[\sum_{j=1}^d \rho_N \left(x; f_j(s)\right)\right]\end{equation*}
is defined and is holomorphic in the half-plane $\{\sigma > 1-\frac{N+1}{d}\}$;
\item the classical properties of the Riemann zeta function imply that the function $s\mapsto (s-1)\zeta_A(s)$ is holomorphic in the whole complex plane $\CC$.
\end{enumerate}
These last two points and identity \eqref{bon3} implies that $s \mapsto (s-1) Z (s) $ has a holomorphic extension to the half-plane $\{\sigma > 1-\frac{N+1}{d}\}$.
As $N\in \NN$ is arbitrary, we deduce that $s\mapsto (s-1) Z(s)$ has a holomorphic continuation to the whole complex plane $\CC$.\\
It follows that $s\mapsto Z(s)$ has a meromorphic continuation to the whole complex plane $\CC$ with at most one possible simple pole in $s=1$.\\
So to finish the proof of points 1 and 2 of Theorem \ref{prolong}, it suffices to show that $s = 1$ is a pole of residue $1$.
But relation \eqref{bon3} with $N=0$ implies that 
\begin{equation*}\lim_{s\to 1} (s-1) Z(s) = \lim_{s\to 1} (d s -d) \zeta_A (d s -d +1) =1.\end{equation*}
One deduces that $s=1$ is a simple pole of $Z(s)$ and that $Res_{s=1} Z(s) =1$. This completes the proof of points 1 and 2 of Theorem \ref{prolong}.
\end{proof}

\begin{proof}
  {\bf (point 3 of Theorem \ref{prolong})}\\
  First let us recall the classical formula 
  \begin{equation}\label{zetanegative}
    k ~\zeta (1-k) =(-1)^{k-1} B_k = - \Psi((X+1)^k) \quad \forall k\in \NN^*.
  \end{equation}
Formula (\ref{zetanegative}) is also valid for $k=0$ by analytic continuation. We will use this fact in the sequel.\\

\noindent
Let $M \in \NN^*$. Set $N= d M$. In particular, $1 - M >
  1-\frac{N+1}{d}$. If $|\alphab|=N+1$, then for any $j=1,\dots, d$:
  \begin{equation*}
    \alpha_j >M-1 \quad {\mbox { or }} \quad {\mbox { there exists }} k \in \{1,\dots d\}\setminus \{j\} {\mbox { such that }} \alpha_k >M.
  \end{equation*}
  One deduces that for any $j=1,\dots, d$: 
  \begin{eqnarray*}
    \rho_N \left(x; f_j(1-M)\right) &=& (-1)^{N+1} (N+1) \sum_{\alphab \in \NN^d \atop |\alphab| = N+1} \a^{\alphab}\binom{M-1}{\alpha_j} \prod_{k\neq j}^d \binom{M}{\alpha_k} \\
    & & \times 
    \int_0^1 (1-t)^N (1- t x a_j)^{M-\alpha_j} \prod_{k\neq j}^d (1- t x a_k)^{M-1-\alpha_k} ~dt\\
    &=& 0.
  \end{eqnarray*}
  It follows then from \eqref{bon3} that 
  \begin{eqnarray}\label{neg1}
    -M Z(1-M)&=& \sum_{\ell=0}^N \left[ (-1)^{\ell} \sum_{\alphab \in \NN^d \atop |\alphab| =\ell } \a^{\alphab} \prod_{k=1}^d \binom{M}{\alpha_k}\right] \left(\ell-d M\right) 
    \zeta_A\left(1+\ell-d M\right) \nonumber \\
    &=& \sum_{\ell=0}^N \left[ (-1)^{\ell} \sum_{\alphab \in \NN^d \atop |\alphab| =\ell } \a^{\alphab} \prod_{k=1}^d \binom{M}{\alpha_k}\right] \left(\ell-d M\right) 
    \zeta \left(1+\ell-d M\right) \nonumber\\
    & & -\sum_{\ell=0}^N \left[ (-1)^{\ell} \sum_{\alphab \in \NN^d \atop |\alphab| =\ell } \a^{\alphab} \prod_{k=1}^d \binom{M}{\alpha_k}\right] \left(\ell-d M\right) 
    \left(\sum_{u=1}^{A-1} u^{d M -\ell -1}\right).
  \end{eqnarray}
  This sum therefore splits into two parts. Remarking that if
  $|\alphab| > N$ then there exists $k$ such that $\alpha_k > M$ and
  hence $ \binom{M}{\alpha_k}=0$, one can compute the second part:
  \begin{eqnarray}\label{neg2}
    \kappa&:=& -\sum_{\ell=0}^N \left[ (-1)^{\ell} \sum_{\alphab \in \NN^d \atop |\alphab| =\ell } \a^{\alphab} \prod_{k=1}^d \binom{M}{\alpha_k}\right] \left(\ell-d M\right) 
    \left(\sum_{u=1}^{A-1} u^{d M -\ell -1}\right)\\
    &=&\sum_{u=1}^{A-1} u^{d M -1} \sum_{\ell=0}^N \sum_{\alphab \in \NN^d \atop |\alphab| =\ell } (d M -\sum_{j=1}^d \alpha_j) 
    \left[ \prod_{k=1}^d \binom{M}{\alpha_k}\left(-\frac{a_k}{u}\right)^{\alpha_k}\right]\nonumber\\
&=& \sum_{u=1}^{A-1} u^{d M -1} \sum_{\alphab \in \{0,\dots, M\}^d} \ (d M -\sum_{j=1}^d \alpha_j) 
\left[ \prod_{k=1}^d \binom{M}{\alpha_k}\left(-\frac{a_k}{u}\right)^{\alpha_k}\right].\nonumber
\end{eqnarray}
Continuing this computation by splitting this sum in two, we have
\begin{eqnarray}\label{neg3}
\kappa&=& d M \sum_{u=1}^{A-1} u^{d M -1} 
\prod_{k=1}^d \left(1-\frac{a_k}{u}\right)^{M}
 -\sum_{u=1}^{A-1} \sum_{j=1}^d u^{d M -1} \frac{M\left(-\frac{a_j}{u}\right)}{1-\frac{a_j}{u}}\prod_{k=1}^d \left(1-\frac{a_k}{u}\right)^{M}\nonumber\\
&=& d M \sum_{u=1}^{A-1} u^{-1} E(u)^M+ M \sum_{u=1}^{A-1} \sum_{j=1}^d \frac{a_j}{u(u-a_j)} E(u)^M \nonumber\\
&=& M \sum_{u=1}^{A-1} E(u)^{M-1} E'(u).
\end{eqnarray}

Relations \eqref{zetanegative}, \eqref{neg1}, \eqref{neg2} and \eqref{neg3} imply that 
\begin{equation}\label{fin1}
Z(1-M)= \frac{(-1)^{d M -1}}{M} \sum_{\ell=0}^N \left[ \sum_{\alphab \in \NN^d \atop |\alphab| =\ell } \a^{\alphab} \prod_{k=1}^d \binom{M}{\alpha_k}\right] B_{d M -\ell} 
-\sum_{u=1}^{A-1} E(u)^{M-1} E'(u).
\end{equation}
On the other hand, it is easy to see that 
\begin{eqnarray*}
E(X)^M&=& \prod_{j=1}^d (X-a_j)^M= \prod_{j=1}^d \left( \sum_{\alpha_j =0}^M \binom{M}{\alpha_j} (-a_j)^{\alpha_j} X^{M-\alpha_j}\right)\\
&=& \sum_{\alphab \in \{0,\dots, M\}^d} (-1)^{|\alphab|} \a^{\alphab} \left(\prod_{j=1}^d \binom{M}{\alpha_j}\right) X^{d M -|\alphab|}
= \sum_{\ell=0}^N (-1)^{\ell} \left[\sum_{\alphab \in \NN^d \atop |\alphab|=\ell} \a^{\alphab} \prod_{j=1}^d \binom{M}{\alpha_j}\right] X^{d M -\ell}.
\end{eqnarray*}
Using \eqref{psi_shifted}, it follows that 
\begin{equation*}\Psi(E(X+1)^M)=(-1)^{d M} \sum_{\ell=0}^N \left[\sum_{\alphab \in \NN^d \atop |\alphab|=\ell} \a^{\alphab} \prod_{j=1}^d \binom{M}{\alpha_j}\right] B_{d M -\ell}.\end{equation*}
One then deduces from \eqref{fin1} that 
$Z(1-M)= -\frac{1}{M} \Psi(E(X+1)^M) -\sum_{u=1}^{A-1} E(u)^{M-1} E'(u)$. This completes the proof of Theorem \ref{prolong}.
\end{proof}

\bibliographystyle{plain}
\bibliography{goren}

\begin{thebibliography}{10}

\bibitem{batyrev_nill}
V.~Batyrev and B.~Nill.
\newblock Combinatorial aspects of mirror symmetry.
\newblock In {\em Integer points in polyhedra---geometry, number theory,
  representation theory, algebra, optimization, statistics}, volume 452 of {\em
  Contemp. Math.}, pages 35--66. Amer. Math. Soc., Providence, RI, 2008.

\bibitem{batyrev_jag}
V.~V. Batyrev.
\newblock Dual polyhedra and mirror symmetry for {C}alabi-{Y}au hypersurfaces
  in toric varieties.
\newblock {\em J. Algebraic Geom.}, 3(3):493--535, 1994.

\bibitem{berndtV}
B.~C. Berndt.
\newblock {\em Ramanujan's notebooks. {P}art {V}}.
\newblock Springer-Verlag, New York, 1998.

\bibitem{carlitz_beta}
L.~Carlitz.
\newblock {$q$}-{B}ernoulli numbers and polynomials.
\newblock {\em Duke Math. J.}, 15:987--1000, 1948.

\bibitem{chapoton_om}
F.~Chapoton.
\newblock A rooted-trees {$q$}-series lifting a one-parameter family of {L}ie
  idempotents.
\newblock {\em Algebra Number Theory}, 3(6):611--636, 2009.

\bibitem{chapoton_qehrhart}
F.~Chapoton.
\newblock {q-analogues of {E}hrhart polynomials}.
\newblock {\em to appear in Proceedings of the Edinburgh Mathematical Society},
  2013.

\bibitem{chapoton_om_qx}
F.~Chapoton.
\newblock Sur une s{\'e}rie en arbres {\`a} deux param{\`e}tres.
\newblock {\em S{\'e}m. Lothar. Combin.}, 70:Art. B70a, 20, 2013.

\bibitem{decrisenoy}
M.~de~Crisenoy.
\newblock Values at $t$-tuples of negative integers of twisted multivariable
  zeta series associated to polynomials of several variables.
\newblock {\em Compos. Math.}, 142(6):1373--1402, 2006.

\bibitem{essouabri}
D.~Essouabri.
\newblock Singularit\'e des s\'eries de {D}irichlet associ\'ees \`a des
  polyn\^omes de plusieurs variables et applications en th\'eorie analytique
  des nombres.
\newblock {\em Ann. Inst. Fourier}, 47(2):429--483, 1997.

\bibitem{gasper}
G.~Gasper and M.~Rahman.
\newblock {\em Basic hypergeometric series}, volume~96 of {\em Encyclopedia of
  Mathematics and its Applications}.
\newblock Cambridge University Press, Cambridge, second edition, 2004.
\newblock With a foreword by Richard Askey.

\bibitem{mahler}
K.~Mahler.
\newblock {\"Uber} einer {S}atz von {M}ellin.
\newblock {\em Mathematische Annalen}, 100:384--395, 1928.

\bibitem{nill_gor}
B.~Nill.
\newblock Gorenstein toric {F}ano varieties.
\newblock {\em Manuscripta Math.}, 116(2):183--210, 2005.

\bibitem{nill}
B.~Nill.
\newblock Volume and lattice points of reflexive simplices.
\newblock {\em Discrete Comput. Geom.}, 37(2):301--320, 2007.

\bibitem{nill_schepers}
B.~Nill and J.~Schepers.
\newblock Gorenstein polytopes and their stringy {$E$}-functions.
\newblock {\em Math. Ann.}, 355(2):457--480, 2013.

\bibitem{zeng_pfaff}
J.~Zeng.
\newblock Pfaff-{S}aalsch\"utz revisited.
\newblock {\em J. Combin. Theory Ser. A}, 51(1):141--143, 1989.

\end{thebibliography}

\end{document}